\documentclass[12pt]{amsart}

\usepackage{amsmath,amsthm}
\usepackage{amsfonts}
\usepackage{verbatim}
\usepackage{amssymb}
\usepackage{txfonts}
\usepackage{amscd}
\usepackage{bbm}
\usepackage{hyperref}
\usepackage{fouriernc}

\usepackage{tikz-cd}
\usepackage{gauss}
\usepackage{cleveref}
\usepackage{mathrsfs}

\usepackage{tikz}

\usepackage{geometry}

\usepackage[alphabetic]{amsrefs}
\geometry{
	top=3cm,    
	bottom=3cm, 
	left=3cm,   
	right=3cm   
}

\linespread{1.15}

\DeclareMathAlphabet{\altmathcal}{OMS}{cmsy}{m}{n}


\newtheorem{theorem}{Theorem}[section]
\newtheorem{corollary}[theorem]{Corollary}
\newtheorem{lemma}[theorem]{Lemma}

\newtheorem{definition}[theorem]{Definition}

\newtheorem{remark}[theorem]{Remark}
\newtheorem{example}[theorem]{Example}

\newcommand{\RR}{\mathbb{R}}
\newcommand{\ZZ}{\mathbb{Z}}

\newcommand{\Aut}{\operatorname{Aut}}
\newcommand{\SAut}{\operatorname{SAut}}

\newcommand{\Sym}{\operatorname{Sym}}

\newcommand{\SL}{\operatorname{SL}}
\newcommand{\Sq}{\operatorname{Sq}}
\newcommand{\Adj}{\operatorname{Adj}}
\newcommand{\Op}{\operatorname{Op}}
\newcommand{\Free}{\operatorname{Free}}
\newcommand{\Edg}{\operatorname{Edg}}
\newcommand{\Tri}{\operatorname{Tri}}
\newcommand{\Tet}{\operatorname{Tet}}
\newcommand{\Stab}{\operatorname{Stab}}

\DefineSimpleKey{bib}{primaryclass}{}
\DefineSimpleKey{bib}{archiveprefix}{}

\BibSpec{arXiv}{%
	+{}{\PrintAuthors}{author}
	+{,}{ \textit}{title}
	+{}{ \parenthesize}{date}
	+{,}{ arXiv }{eprint}
}

\author{Piotr Mizerka}
\email{pmizerka@impan.pl}
\address{Institute of Mathematics of the Polish Academy of Sciences, \'{S}niadeckich 8, 00-656, Warsaw, Poland}

\begin{document}
	\sloppy 
	
	\title[Inducing spectral gaps for $\SL_n(\ZZ)$ and $\SAut(F_n)$]{Inducing spectral gaps for the cohomological\\ Laplacians of $\SL_n(\ZZ)$ and $\SAut(F_n)$}
	
	\begin{abstract}
		The technique of inducing spectral gaps for cohomological Laplacians in degree zero was used by Kaluba, Kielak and Nowak \cite{kkn} to prove property (T) for $\Aut(F_n)$ and $\SL_n(\ZZ)$. In this paper, we adapt this technique to Laplacians in degree one. This allows to provide a lower bound for the cohomological Laplacian in degree one for $\SL_n(\ZZ)$ for every unitary representation. In particular, one gets in that way an alternative proof of property (T) for $\SL_n(\ZZ)$ whenever $n\geq 3$.
	\end{abstract}
	\maketitle

	\section{Introduction}
	Cohomological Laplacians were introduced by Bader and Nowak \cite{bader-nowak} in the context of vanishing of group cohomology with unitary coefficients. Such vanishing of the first cohomology is known to be equivalent to Kazhdan's property (T) and an algebraic characterization for it was found by Ozawa \cite{ozawa1} for finitely generated groups. It turns out that property (T) can be described in terms of a specific degree zero \emph{Laplacian} $\Delta$: Ozawa showed that a group $G$ has property (T) if and only if $\Delta^2-\lambda\Delta$ is a sum of squares in the real group ring $\mathbb{R}G$ for some positive spectral gap $\lambda$. Using this characterization, Kaluba, Kielak, Nowak, and Ozawa \cite{kno,kkn} were able to show property (T) for $\SL_n(\ZZ)$ and $\Aut(F_n)$ for $n\geq 3$ and $n\geq 5$ respectively.  Estimating the spectral gap $\lambda$ from below has another advantage of prividing a lower bound for $\kappa(G,S)$, the \emph{Kazhdan constant} associated to a generating set $S$ of a group $G$. It is known that $G$ has property (T) if and only if $\kappa(G,S)>0$ and this inequality does not depend on the generating set. For a symmetric generating set $S=S^{-1}$, the lower bound for Kazhdan constants is given by the formula (see \cite[Remark 5.4.7]{bekka} and \cite{kn2018})
	$$
	\sqrt{\frac{2\lambda}{|S|}}\leq\kappa(G,S),
	$$
	where $\lambda$ is a positive constant such that $\Delta^2-\lambda\Delta$ is a sum of squares.
	
	We focus on providing lower bounds for the spectral gap of $\Delta_1$, the one-degree Laplacian ($\Delta_1$ can be in fact any group ring matrix which can be used to compute the first group cohomology -- details can be found in \cite{bader-nowak} -- and we focus on its particular model here, defined by \emph{Fox derivatives} \cite{fox1,fox2,lyndon1}). The existence of such a positive spectral gap for a group $G$ is equivalent  to vanishing of the first cohomology of $G$ with unitary coefficients and \emph{reducibility} of the second (reducibility means that the images of the differentials in the chain complex of Hilbert spaces computing the cohomology for a given unitary representation are closed), see \cite[The Main Theorem]{bader-nowak}. Once a positive spectral gap is found for $\Delta_1$, it turns out to be a lower bound for the Ozawa's spectral gap, see Remark \ref{remark:star_conj}. We acknowledge that it has been recently announced by U. Bader and R. Sauer that the reducibility condition is a consequence of vanishing of cohomology one degree lower, see \cite{Bader_Sauer}. Thus, $\Delta_1$ having a positive spectral gap is equivalent to a group having property (T). By \cite{bader-nowak} and Theorem 3.11 from \cite{Bader_Sauer} and the paragraph below it, we know actually that a more general statement holds: for any $n\geq 1$, the existence of a spectral gap for some model of $\Delta_n$ is equivalent to the vanishing of the $n$-th cohomology of a group with unitary coefficients.  
	
	Recently, M. Kaluba, P. W. Nowak and the author showed that for $\Delta_1$, the first cohomological Laplacian of $\SL_3(\ZZ)$, and $\lambda=0.32$, the matrix $\Delta_1-\lambda I$ is a sum of squares (\cite[Theorem 1.1]{kmn}). In this paper we show how to use a slightly stronger statement for $\SL_3(\ZZ)$ to induce the spectral gap for $G_n=\SL_n(\ZZ)$, whenever $n\geq 3$. Our technique works as well for the case $G_n=\SAut(F_n)$. We remark however that in such a case we were unable to obtain such a spectral gap for any particular $n$. On the other other hand, in his preprint \cite{nitsche} M. Nitsche shows property (T) for $\SAut(F_4)$. This, in combination with the work of Bader and Nowak \cite{bader-nowak} and the announcement of Bader and Sauer, would yield the desired gap for the Laplacian in degree one for $\SAut(F_4)$. Nevertheless, this will still not allow us to induce the spectral gap since we would need a spectral gap for a specific summand of the Laplacian of $\SAut(F_4)$. 
	
	Our induction technique is inspired by the idea of decomposing the generating set of $G_n$ into three parts: square, adjacent, and opposite, see \cite{kkn}. This allows us to decompose $\Delta_1$ for any $G_n$ into the sum of the appropriate parts. Denoting the adjacent part by $\Adj_n$ for any $n\geq 3$, and the generating set for $G_n$ by $\altmathcal{S}_n$, the main theorem can be stated as follows.
	\begin{theorem}[cf. \Cref{corollary:main_ind}]\label{theorem:main0}
		Suppose $\Adj_m-\lambda I_{|\altmathcal{S}_m|}$ is a sum of squares in $\mathbb{M}_{|\altmathcal{S}_m|\times |\altmathcal{S}_m|}(\RR G_m)$. Then \begin{align*}
			\Adj_n-\frac{n-2}{m-2}\lambda I_{|\altmathcal{S}_n|}
		\end{align*}
		is a sum of squares in $\mathbb{M}_{|\altmathcal{S}_n|\times |\altmathcal{S}_n|}(\RR G_n)$ for any $n\geq m$.
	\end{theorem}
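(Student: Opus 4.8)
The plan is to realize $G_n = \SL_n(\ZZ)$ (or $\SAut(F_n)$) together with its generating set $\altmathcal{S}_n$ as built up from many copies of $G_m$, one copy for each way of embedding an $m$-element index subset into $\{1,\dots,n\}$, and to push the given sum-of-squares certificate for $\Adj_m - \lambda I$ through all of these embeddings. Concretely, first I would fix, for each $m$-subset $J \subseteq \{1,\dots,n\}$ (respectively each ordered choice of coordinates), the canonical inclusion $\iota_J \colon G_m \hookrightarrow G_n$ and the induced $*$-algebra homomorphism $\RR G_m \to \RR G_n$, which extends entrywise to matrix algebras. The adjacent part $\Adj_n$ is indexed by the "adjacent" generators of $G_n$, i.e. elementary matrices $E_{ij}$ (or Nielsen-type moves) whose index pair $\{i,j\}$ is supported inside some $m$-subset; the key structural observation to verify is that $\Adj_n$ decomposes, up to the combinatorics of how many $m$-subsets contain a given pair, as a sum over $J$ of the images $\iota_J(\Adj_m)$ compressed into the appropriate coordinates of $\mathbb{M}_{|\altmathcal{S}_n|\times|\altmathcal{S}_n|}(\RR G_n)$. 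This is the place where the constant $\frac{n-2}{m-2}$ must emerge: a fixed adjacent generator of $G_n$, determined by a pair $\{i,j\}$, lies in exactly $\binom{n-2}{m-2}$ of the $m$-subsets, while the total count of subsets is $\binom{n}{m}$, and the ratio $\binom{n-2}{m-2}/\binom{n}{m}$ rescaled against the number of generators in $\altmathcal{S}_m$ versus $\altmathcal{S}_n$ collapses to $\frac{n-2}{m-2}$.

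With that combinatorial identity in hand, the argument is essentially formal. Applying $\iota_J$ to the hypothesis $\Adj_m - \lambda I_{|\altmathcal{S}_m|} = \sum_k \xi_k^* \xi_k$ gives, for each $J$, that $\iota_J(\Adj_m) - \lambda (\text{identity on the } J\text{-block})$ is a sum of squares in the matrix ring over $\RR G_n$ (since applying a $*$-homomorphism to a sum of squares yields a sum of squares, and zero-padding a sum of squares to a larger matrix algebra keeps it one). Summing these relations over all $m$-subsets $J$, the left-hand sides add up — using the decomposition of $\Adj_n$ — to a scalar multiple of $\Adj_n$ minus $\lambda$ times a scalar multiple of $I_{|\altmathcal{S}_n|}$, and dividing through by the appropriate positive combinatorial factor yields exactly $\Adj_n - \frac{n-2}{m-2}\lambda I_{|\altmathcal{S}_n|}$ as a sum of squares. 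The passage from $\Adj_m$ for $n=m$ to general $n$ then follows, and in fact one only needs the case $m$ fixed and $n$ arbitrary, so transitivity of the construction is automatic.

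I expect the genuine obstacle to be the structural decomposition of $\Adj_n$ in the second step, not the soft sum-of-squares manipulations. One has to be careful that the $\iota_J(\Adj_m)$ live on overlapping sub-blocks of $\mathbb{M}_{|\altmathcal{S}_n|\times|\altmathcal{S}_n|}(\RR G_n)$ indexed by $\altmathcal{S}_n$, that off-diagonal interaction terms between different subsets $J, J'$ either vanish or are accounted for, and that the "identity" terms $\lambda I_{|\altmathcal{S}_m|}$ assemble correctly — a generator shared by several subsets contributes its $\lambda$ once per subset, which is precisely what forces the factor $\frac{n-2}{m-2}$ rather than something else. Making the indexing bookkeeping precise (perhaps by writing $\Adj_n$ explicitly in terms of the squares of single generators and their sums over stars of edges in the relevant graph, as in \cite{kkn}) is where the care is needed; once that identity is nailed down, everything else is a one-line application of the hypothesis under the homomorphisms $\iota_J$.
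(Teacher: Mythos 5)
Your proposal is correct and is essentially the paper's argument: the paper sums the embedded certificate $\sigma\left(\Adj_m-\lambda I_{|\altmathcal{S}_m|}\right)$ over all $\sigma\in\Sym_n$ and counts multiplicities (after establishing the $\Sym_m$-invariance of $\Adj_m$ via the equivariance of Fox derivatives), which coincides with your sum over $m$-element index subsets $J$ up to an overall factor of $m!\,(n-m)!$. One correction to your bookkeeping for the constant: the normalizing count for $\Adj_n$ is not $\binom{n}{m}$ but $\binom{n-3}{m-3}$, the number of $m$-subsets containing a fixed \emph{triangle} (since $\Adj_n$ decomposes as a sum of blocks $\Adj_{\theta}$ over triangles $\theta$, whereas the identity contributions are governed by edges, counted with multiplicity $\binom{n-2}{m-2}$), and the spectral gap ratio arises as $\binom{n-2}{m-2}\big/\binom{n-3}{m-3}=\frac{n-2}{m-2}$.
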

	\noindent Bounding the spectral gap for the adjacent part $\Adj_3$ of $\SL_3(\ZZ)$, we get the following
	\begin{corollary}[cf. \Cref{corollary:main}]\label{corollary:main_intr}
		Let $\Delta_1$ be the cohomological Laplacian in degree one of $G_n=\SL_n(\ZZ)$. Then $\Delta_1-\lambda I_{|\altmathcal{S}_n|}$ is a sum of squares for $\lambda=0.217(n-2)$.
	\end{corollary}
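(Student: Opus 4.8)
The plan is to reduce the statement to the base case $n=3$ by means of \Cref{theorem:main0}, and to settle that base case with an explicit sum-of-squares certificate found by semidefinite programming. First we record the decomposition of the degree-one Laplacian induced by the partition of the generating set $\altmathcal{S}_n$ of $G_n=\SL_n(\ZZ)$ into its \emph{square}, \emph{adjacent} and \emph{opposite} parts, in the spirit of \cite{kkn}:
\begin{equation*}
\Delta_1 \;=\; \Sq_n + \Adj_n + \Op_n \qquad\text{in }\ \mathbb{M}_{|\altmathcal{S}_n|\times|\altmathcal{S}_n|}(\RR G_n).
\end{equation*}
A key structural fact, established along the lines of \cite{kkn}, is that the square and opposite summands are sums of squares in $\mathbb{M}_{|\altmathcal{S}_n|\times|\altmathcal{S}_n|}(\RR G_n)$ for every $n\geq 3$: $\Sq_n$ splits block-diagonally into pieces supported on the copies coming from single index pairs, and $\Op_n$ into pieces supported on pairs of commuting generators, each such piece being manifestly a sum of squares. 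Granting this, it suffices to prove that $\Adj_n-0.217(n-2) I_{|\altmathcal{S}_n|}$ is a sum of squares, because then
\begin{equation*}
\Delta_1 - 0.217(n-2) I_{|\altmathcal{S}_n|} \;=\; \Sq_n + \Op_n + \bigl(\Adj_n - 0.217(n-2) I_{|\altmathcal{S}_n|}\bigr)
\end{equation*}
is a sum of squares as well.

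Applying \Cref{theorem:main0} with $m=3$, so that $\tfrac{n-2}{m-2}=n-2$, reduces the claim that $\Adj_n-0.217(n-2) I_{|\altmathcal{S}_n|}$ is a sum of squares for all $n\geq 3$ to the single assertion that $\Adj_3-0.217\,I_{|\altmathcal{S}_3|}$ is a sum of squares in $\mathbb{M}_{|\altmathcal{S}_3|\times|\altmathcal{S}_3|}(\RR G_3)$ — the \emph{slightly stronger statement} about $\SL_3(\ZZ)$ alluded to in the introduction, strengthening \cite[Theorem 1.1]{kmn} from $\Delta_1$ to its adjacent summand. We would obtain this certificate computationally, just as in \cite{kmn}: fix a finite symmetric set $B\subset G_3$ (a small ball in the word metric), form the vector $\xi$ of group-ring elements supported on $B$, and solve the semidefinite program searching for the largest $\lambda$ admitting a positive semidefinite matrix $P$ with $\Adj_3-\lambda I_{|\altmathcal{S}_3|}=\xi^{*}P\xi$; this yields a numerical optimum $\lambda^{*}$ comfortably above $0.217$. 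One then replaces the floating-point solution by a nearby exact (rational, or interval-arithmetic-verified) positive semidefinite matrix, the margin between $\lambda^{*}$ and $0.217$ being chosen to absorb the perturbation incurred in the rounding, which produces a rigorous sum-of-squares certificate at the value $\lambda=0.217$.

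The main obstacle is precisely this base-case certification. In contrast with the full Laplacian $\Delta_1$, whose gap is already available, here one needs a positive gap for the strictly smaller operator $\Adj_3$, so the semidefinite program must be re-solved for this summand and the positive semidefiniteness of the rounded certificate verified carefully enough that the guaranteed gap stays above $0.217$; in practice the delicate balance is between the size of the support $B$ (hence of the SDP), the achievable gap, and the rounding margin. A more routine point, but one that also has to be carried out, is to make precise the block decompositions witnessing that $\Sq_n$ and $\Op_n$ are sums of squares uniformly in $n$, transporting the degree-zero argument of \cite{kkn} to the matrix-coefficient setting in degree one used here. Once both ingredients are in place, the displayed identity above exhibits $\Delta_1-0.217(n-2) I_{|\altmathcal{S}_n|}$ as a sum of squares for every $n\geq 3$, as required.
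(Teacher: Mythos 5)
Your proposal is correct and follows essentially the same route as the paper: the paper proves $\Adj_3-0.217\,I_6$ is a sum of squares by a certified SDP computation (its Lemma 5.1), induces the gap to $\Adj_n$ via the symmetrization theorem with $m=3$, and absorbs the remaining terms using the facts that $\Sq_n^-$ and $\Op_n^++2\Op_n^-$ are sums of squares, writing $\Delta_1-\lambda I=\left(\Adj_n-\lambda I\right)+\Sq_n^-+\tfrac12\left(\Op_n^++(\Op_n^++2\Op_n^-)\right)$. The only cosmetic difference is that you assert $\Op_n=\Op_n^++\Op_n^-$ is itself a sum of squares, which is exactly the averaging of $\Op_n^+$ with $\Op_n^++2\Op_n^-$ that the paper performs explicitly.
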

	\noindent To our knowledge, the lower bounds for the spectral gaps we obtained are the first examples of explicit computations for $\Delta_1$ for an infinite family of property (T) groups.
	
	This article is organized as follows. In \cref{section:fox} we introduce Fox derivatives which provide a setting to calculate one-degree Laplacians and define the positivity in the group ring setting in terms of sums of squares. In the next section we introduce the presentions for $G_n$: using the \emph{elementary matrices} for $G_n=\SL_n(\ZZ)$ and \emph{Nielsen transvections} for $G_n=\SAut(F_n)$. Next, we decompose the Laplacian in degree one into the three parts mentioned above. In \cref{section:symmetrization} we show \Cref{theorem:main0} using the technique of symmetrization and apply this result in \cref{section:application_sln} to obtain a lower bound for the spectral gap of $\Delta_1$ for $G_n=\SL_n(\ZZ)$ whenever $n\geq 3$.
	
	\subsection*{Acknowledgements}
	The author would like to express his sincere thanks to Prof. Dawid Kielak for many valuable suggestions concerning the induction technique, in particular for pointing out the decomposition of $\Delta_1^+$ defined in \cref{section:decompositions}. I am also greatly indebted to Dr Marek Kaluba for his collaboration on the substance of \cref{subsection:general_setting,subsection:inv} and the suggestion to express the symmetrization used in the induction in the language involving simplices, as well as his careful reading of the whole contents and many valuable remarks.
	
	This research was supported by National Science Centre, Poland SONATINA 6 grant \emph{Algebraic spectral gaps in group cohomology} (grant agreement no. 2022/44/C/ST1/00037).
	
	\section{Fox derivatives and sums of squares}\label{section:fox}
	We describe the model of $\Delta_1$ which appears in Theorem \ref{theorem:main0}. This can be achieved by means of \emph{Fox derivatives}. Further details can be found in the papers of Fox and Lyndon, \cite{fox1,fox2,lyndon1,lyndon2}.
	
	Let $G=\langle s_1,\ldots,s_n|r_1,\ldots,r_m\rangle$ be a finitely presented group. The Fox derivatives $\frac{\partial}{\partial s_j}$ are the endomorphisms of $\ZZ F_n$, the group ring of the free group $F_n=\langle s_1,\ldots,s_n\rangle$, given by the following conditions:
	\begin{align*}
		\frac{\partial 1}{\partial s_j}&=0,\\
		\frac{\partial s_i}{\partial s_j}&=\delta_{ij},\\
		\frac{\partial(uv)}{\partial s_j}&=\frac{\partial u}{\partial s_j}+u\frac{\partial v}{\partial s_j},
	\end{align*}
	where $\delta_{ij}$ denotes the Kronecker delta. The \emph{Jacobian} of $G$ is then the matrix $J=\left[\frac{\partial r_i}{\partial s_j}\right]\in\mathbb{M}_{m\times n}(\ZZ G)$ obtained by quotienting the Fox derivatives into the group ring $\ZZ G$. Denoting by $d$ the vertical vector $\left[1-s_j\right]\in\mathbb{M}_{n\times 1}(\ZZ G)$, the one-degree Laplacian $\Delta_1$ associated to the presentation $G=\langle s_1,\ldots,s_n|r_1,\ldots,r_m\rangle$ is defined by
	$$
	\Delta_1=\Delta_1^++\Delta_1^-,
	$$
	where $\Delta_1^+=J^*J$ and $\Delta_1^-=dd^*$, and, for any matrix $M$ with entries in $\ZZ G$, the matrix $M^*$ is the composition of the matrix transposition with the $*$-involution on $\ZZ G$ given by $g\mapsto g^{-1}$.
	\begin{definition}
		We say that a matrix $M\in\mathbb{M}_{k\times k}(\RR G)$ is a \textbf{sum of squares} if there exist matrices $M_1,\ldots,M_l\in\mathbb{M}_{k\times k}(\RR G)$ such that $M=M_1^*M_1+\ldots M_l^*M_l$.
	\end{definition}
	\noindent It follows from \cite[Lemma 14]{bader-nowak} that $\Delta_1^{\pm}$, and therefore $\Delta_1$ as well, are sums of squares.
	
	By saying that a matrix $M\in\mathbb{M}_{k\times k}(\RR G)$ has a positive \emph{spectral gap} we mean that there exists a positive $\lambda$ such that $M-\lambda I_k$ is a sum of squares.
	
	As noted in \cite[Lemma 2.1]{kmn}, it turns out that we have some freedom in choosing the relations which build up $\Delta_1^+$. Denote 
	$$
	J_i=\left[\frac{\partial r_i}{\partial s_1}\cdots\frac{\partial r_i}{\partial s_n}\right]
	$$ 
	and put $\Delta_{I}^+=\sum_{i\in I}J_i^*J_i$ for any subset of indices $I\subseteq\{1,\ldots,m\}$. Then, once $\Delta_{I}^++\Delta_1^-\in\mathbb{M}_{n\times n}(\RR G)$ has a spectral gap then $\Delta_1=\Delta_{I}^++\Delta_{\{1,\ldots,m\}\setminus I}^++\Delta_1^-\in\mathbb{M}_{n\times n}(\RR G)$ has at least the same spectral gap.
	
	Computing lower bounds for sepctral gaps for the first Laplacians provides the bounds for Ozawa's spectral gap:
	\begin{remark}\label{remark:star_conj}
		\emph{Suppose the matrix $C\Delta_{I}^++\Delta_1^--\lambda I$ is a sum of squares for some subset of relator indices $I\subseteq\{1,\ldots,m\}$ and a positive constant $C$ and let $\Delta=d^*d$. Then $\Delta^2-\lambda\Delta=d^*\left(C\Delta_{I}^++\Delta_1^--\lambda I\right)d$ is a sum of squares as well (note that $d^*\Delta_{I}^+d$ is always zero). Note that in the case the generating set $\altmathcal{S}$ and the set of its inverses, $\altmathcal{S}^{-1}$, are disjoint, then $\Delta$ is the zero degree Laplacian $|\altmathcal{S}|-\sum_{s\in\altmathcal{S}}s$ for the symmetric generating set $\altmathcal{S}\cup \altmathcal{S}^{-1}$. }
	\end{remark}
	
	\section{Presentations of $\SL_n(\ZZ)$ and $\SAut(F_n)$}
	In this section, we introduce the presentations of $\SL_n(\ZZ)$ and $\SAut(F_n)$ which we shall use to perform the Fox calculus and prove property (T) for these groups. The presentations can be found in \cite{sln_pres} and \cite{Gersten} for $\SL_n(\mathbb{Z})$ and $\SAut(F_n)$ respectively. For the latter, we provide an equivalent presentation which is easy to obtain from \cite{Gersten}.
	
	The presentation for $\SL_n(\ZZ)$ is given below.
	\begin{align*}
		\SL_n(\ZZ)=\big\langle E_{ij},1\leq i\neq j\leq n\big|&[E_{ij},E_{kl}],i\neq l, j\neq k, (i,j)\neq(k,l),\\
		&[E_{ij},E_{jk}]E_{ik}^{-1}\text{ for distinct }i,j,k,\\
		&\left(E_{12}E_{21}^{-1}E_{12}\right)^4\big\rangle.
	\end{align*}
	The generator $E_{ij}$ represents the elementary matrix with $1$ at the $(i,j)$-th entry and the diagonal and zeroes elsewhere. Using \Cref{remark:star_conj}, we shall exclude the relation $\left(E_{12}E_{21}^{-1}E_{12}\right)^4$ from further considerations.
	
	The generators of $\SAut(F_n)$ are denoted by $\lambda_{ij}$ and $\rho_{ij}$ for $1\leq i\neq j\leq n$ and are called left and right \emph{Nielsen transvections} respectively. The relator set contains commutators and pentagonal relations as well, along with other relations which we shall not consider, again by \Cref{remark:star_conj}. We only present the commutator and pentagonal relations therefore. These relations are defined for distinct indices $i,j,k,l\in\{1,\ldots,n\}$ as follows:
	\begin{gather*}
		[\lambda_{i,j},\rho_{ij}],\quad [\lambda_{ij},\lambda_{kl}],\quad [\rho_{ij},\rho_{kl}],
		\quad [\lambda_{ij},\rho_{kl}],\\
		\quad [\lambda_{ij},\lambda_{kj}],\quad [\rho_{ij},\rho_{kj}],
		\quad [\lambda_{ij},\rho_{ik}],\quad [\lambda_{i,j},\rho_{kj}],\\
		\quad \lambda_{ik}^{\pm}[\lambda_{jk}^{\pm},\lambda_{ij}^{-1}],
		\quad \rho_{ik}^{\pm}[\rho_{jk}^{\pm},\rho_{ij}^{-1}],
		\quad \lambda_{ik}^{\pm}[\rho_{jk}^{\mp},\lambda_{ij}],
		\quad \rho_{ik}^{\pm}[\lambda_{jk}^{\mp},\rho_{ij}].
	\end{gather*}
	The transvection generators correspond in $\SAut(F_n)$ to the following automorphisms of $F_n=\langle s_1,\ldots,s_n\rangle$\footnote{We take in $\SAut(F_n)$ the group operation order given by composition of automorphisms, i.e. $\alpha*\beta=\alpha\circ\beta$. Note that this is different than the convention of Gersten \cite{Gersten}. In his paper the convention is: $\alpha*\beta=\beta\circ\alpha$. We had to reverse the order in pentagonal relators for that account.}: 
	\begin{gather*}
		\lambda_{ij}(s_k) =
		\begin{cases}
			s_js_i & \text{ if $k=i$,} \\
			s_k & \text{otherwise,}
		\end{cases}\quad
		\rho_{ij}(s_k) =
		\begin{cases}
			s_is_j & \text{ if $k=i$,} \\
			s_k & \text{otherwise.}
		\end{cases}
	\end{gather*}
	\section{Decomposition into square, adjacent and opposite part}\label{section:decompositions}
	Let $n\geq 3$ be an integer and $G_n$ be $\SL_n(\ZZ)$ or $\SAut(F_n)$. We introduce the following decompositions of the Laplacians $\Delta_1^{\pm}$ of $G_n$:
	\begin{eqnarray*}
		\Delta_1^{\pm} =\Sq_n^{\pm}+\Adj_n^{\pm}+\Op_n^{\pm}.
	\end{eqnarray*}
	Our decomposition is inspired by \cite[pp. 543 -- 545]{kkn}.
	
	The generators of $G_n$ take form $\omega_{ij}$ for $1\leq i\neq j\leq n$ and $\omega$ denoting either elementary matrices or Nielsen transvections. Then, for any $3\leq m\leq n$, there exists a natural embedding $G_m\hookrightarrow G_n$ given by $\omega_{ij}\mapsto\omega_{ij}$. From now on, when talking about the relations of $G_n$, we shall restrict ourselves to commutator and pentagonal relations only.
	
	Let $C_n$ be the $(n-1)$-simplex with the vertices $\{1,\ldots,n\}$ and let $F(C_n)$ denote the set of its faces, that is, the subsets of the set $\{1,\ldots,n\}$. Denoting by $\altmathcal{S}_n$ the generator set of $G_n$ and by $\Free(\altmathcal{S}_n)$ the free group on $\altmathcal{S}_n$, one can define the map
	\begin{align*}
		\phi:\Free(\altmathcal{S}_n)\longrightarrow F(C_n)
	\end{align*}
	by sending an element of $\Free(\altmathcal{S}_n)$ to the face on the indices of the generators occurring in that element, i.e. 
	$$
	\phi(\omega_{i_1j_1}\ldots\omega_{i_kj_k})=\{i_1,j_1,\ldots,i_k,j_k\}.
	$$
	The generators $\omega_{i_lj_l}$ can be either elementary matrices or left or right Nielsen transvections.
	
	In order to define $\Sq_n^{\pm}$, $\Adj_n^{\pm}$ and $\Op_n^{\pm}$ we distinguish from $F(C_n)$ the following three subsets of faces
	\begin{align*}
		\Edg_n&=\{f\in F(C_n)||f|=2\},\\
		\Tri_n&=\{f\in F(C_n)||f|=3\},\\
		\Tet_n&=\{f\in F(C_n)||f|=4\},
	\end{align*}
	that is, $\Edg_n$, $\Tri_n$ and $\Tet_n$ constitute respectively the sets of edges, triangles, and tetrahedra of $C_n$.
	
	The decomposition $\Delta_1^-=\Sq_n^-+\Adj_n^-+\Op_n^-$ is obtained as follows:
	\begin{align*}
		\left(\Sq^-_n\right)_{s,t}&=\left(\Delta^-_1\right)_{s,t}\text{ for }\phi(s)\cup\phi(t)\in\Edg_n\text{ and } 0 \text{ otherwise},\\
		\left(\Adj^-_n\right)_{s,t}&=\left(\Delta^-_1\right)_{s,t}\text{ for }\phi(s)\cup\phi(t)\in\Tri_n\text{ and } 0 \text{ otherwise},\\
		\left(\Op^-_n\right)_{s,t}&=\left(\Delta^-_1\right)_{s,t}\text{ for }\phi(s)\cup\phi(t)\in\Tet_n\text{ and } 0 \text{ otherwise}.
	\end{align*}
	We remark that the group ring elements $d^*\Sq^-_nd$, $d^*\Adj^-_nd$, and $d^*\Op^-_nd$ are equal to the elements $\Sq_n$, $\Adj_n$, and $\Op_n$ as defined in \cite{kkn}.
	
	The decomposition $\Delta_1^+=\Sq^+_n+\Adj^+_n+\Op^+_n$ is defined, in turn, by summing over specific relators of $G_n$.
	\begin{align*}
		\left(\Sq^+_n\right)_{s,t}&=\sum_{\phi(r)\in\Edg_n}\left(\frac{\partial r}{\partial s}\right)^*\frac{\partial r}{\partial t},\\
		\left(\Adj^+_n\right)_{s,t}&=\sum_{\phi(r)\in\Tri_n}\left(\frac{\partial r}{\partial s}\right)^*\frac{\partial r}{\partial t},\\
		\left(\Op^+_n\right)_{s,t}&=\sum_{\phi(r)\in\Tet_n}\left(\frac{\partial r}{\partial s}\right)^*\frac{\partial r}{\partial t}.
	\end{align*}
	Note that $\left(\Adj^+_n\right)_{s,t}$ and $\left(\Op^+_n\right)_{s,t}$ can be non-zero only if $s=t$ or $\phi(s)\cup\phi(t)$ belongs to $\Tri_n$ and $\Tet_n$ respectively. Since we do not consider relation $\left(E_{12}E_{21}^{-1}E_{12}\right)^4$ for $\SL_n(\ZZ)$, there are no relators $r$ such that $\phi(r)\in\Edg_n$ for this case and so $\Sq_n^+$ is the zero matrix for $\SL_n(\ZZ)$. This is not the case for $\SAut(F_n)$ due to the presence of relations $[\lambda_{i,j},\rho_{ij}]$ - this is, however, not a problem for us, since we only need $\Sq_n^+$ to be a sum of squares which is obvious by definition.
	
	 We shall call $\Sq_n^{\pm}$, $\Adj_n^{\pm}$ and $\Op_n^{\pm}$ the \emph{square}, \emph{adjacent} and \emph{opposite} part respectively.
	
	\begin{example}
		\emph{Let $n=3$ and let us focus on $G_3=\SL_3(\mathbb{Z})$. We have six generators of $G_3$, namely the elementary matrices $E_{12}$, $E_{21}$, $E_{13}$, $E_{31}$, $E_{23}$, and $E_{32}$. The relator set $R$ we take into account contains the following $18$ relators:
			\begin{eqnarray*}
				&&[E_{12},E_{13}],[E_{13},E_{12}],[E_{12},E_{32}],[E_{32},E_{12}],\\
				&&[E_{13},E_{23}],[E_{23},E_{13}],[E_{21},E_{23}],[E_{23},E_{21}],\\
				&&[E_{21},E_{31}],[E_{31},E_{21}],[E_{31},E_{32}],[E_{32},E_{31}],\\
				&&[E_{12},E_{23}]E_{13}^{-1},[E_{13},E_{32}]E_{12}^{-1},[E_{21},E_{13}]E_{23}^{-1},\\
				&&[E_{23},E_{31}]E_{21}^{-1},[E_{31},E_{12}]E_{32}^{-1},[E_{32},E_{21}]E_{31}^{-1}.
			\end{eqnarray*}
			The simplex $C_3$ is a triangle. Its faces contain three edges -- more precisely, $\Edg_3=\{\{1,2\},\{1,3\},\{2,3\}\}$, and only one triangle, i.e. $\Tri_3=\{\{1,2,3\}\}$. There are no tetrahedra in $C_3$. 
			\begin{figure}[h]
				\centering
				\begin{tikzpicture}
					\coordinate (A) at (0,0);
					\coordinate (B) at (3,0);
					\coordinate (C) at (1.5,2.6);
					\draw[black, ultra thick] (A) -- (B);
					\draw[black, ultra thick] (A) -- (C);
					\draw[black, ultra thick] (B) -- (C);
					\filldraw[black] (A) circle (2.75pt) node[below left]{$1$};
					\filldraw[black] (B) circle (2.75pt) node[below right]{$2$};
					\filldraw[black] (C) circle (2.75pt) node[above]{$3$};
					\fill[black, opacity=0.1] (A) -- (B) -- (C) -- cycle;
					\coordinate (e13) at (-1.5,1.5);
					\coordinate (e23) at (4.5,1.5);
					\coordinate (e12) at (1.5,-1.5);
					\node at (e13) {$\{E_{13},E_{31}\}$};
					\node at (e23) {$\{E_{23},E_{32}\}$};
					\node at (e12) {$\{E_{12},E_{21}\}$};
					\draw[->,thick] (-0.65,1.5) -- (0.65,1.3);
					\draw[->,thick] (3.65,1.5) -- (2.35,1.3);
					\draw[->,thick] (1.5,-1.25) -- (1.5,-0.1);
				\end{tikzpicture}
				\caption{The simplex $C_3$ with the assignments of the generators of $\SL_3(\mathbb{Z})$ to its edges given by $\phi$.}
				\label{figure:sl3}
		\end{figure}
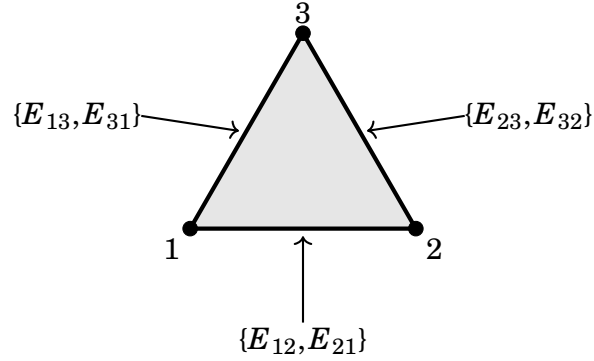}
		
		\noindent
		\emph{The function $\phi$ takes the following values on the generators (the situation depicted in \Cref{figure:sl3}):
			\begin{eqnarray*}
				\phi(E_{12})=\phi(E_{21})=\{1,2\},\quad\phi(E_{13})=\phi(E_{31})=\{1,3\},\quad\phi(E_{23})=\phi(E_{32})=\{2,3\},
			\end{eqnarray*}
			and is constant on the relators -- for every $r\in R$ one has $\phi(r)=\{1,2,3\}$. Thus, denoting by $(i,j)(k,l)$ the element $(1-E_{ij})(1-E_{kl})^*$, we get
			\begin{eqnarray*}
				\Sq_3^-=\begin{bmatrix}
					(1,2)(1,2)&0&(1,2)(2,1)&0&0&0\\
					0&(1,3)(1,3)&0&0&(1,3)(3,1)&0\\
					(2,1)(1,2)&0&(2,1)(2,1)&0&0&0\\
					0&0&0&(2,3)(2,3)&0&(2,3)(3,2)\\
					0&(3,1)(1,3)&0&0&(3,1)(3,1)&0\\
					0&0&0&(3,2)(2,3)&0&(3,2)(3,2)
				\end{bmatrix}
			\end{eqnarray*}
			and
			\begin{eqnarray*}
				\Adj_3^-=\begin{bmatrix}
					0&(1,2)(1,3)&0&(1,2)(2,3)&(1,2)(3,1)&(1,3)(3,2)\\
					(1,3)(1,2)&0&(1,3)(2,1)&(1,3)(2,3)&0&(1,3)(3,2)\\
					0&(2,1)(1,3)&0&(2,1)(2,3)&(2,1)(3,1)&(2,1)(3,2)\\
					(2,3)(1,2)&(2,3)(1,3)&(2,3)(2,1)&0&(2,3)(3,1)&0\\
					(3,1)(1,2)&0&(3,1)(2,1)&(3,1)(2,3)&0&(3,1)(3,2)\\
					(3,2)(1,2)&(3,2)(1,3)&(3,2)(2,1)&0&(3,2)(3,1)&0
				\end{bmatrix}.
			\end{eqnarray*}
			It is apparent that $\Op_3^-$ is the zero matrix. Since $\phi$ equals the face $\{1,2,3\}$ on all relators, we have $$
			\left(\Adj_3^+\right)_{s,t}=\sum_{r\in R}\left(\frac{\partial r}{\partial s}\right)^*\left(\frac{\partial r}{\partial t}\right)=\left(\Delta_1^+\right)_{s,t}
			$$ for any two generators $s,t$. In particular, $\Sq_3^+$ and $\Op_3^+$ are the zero matrices.}
	\end{example}
	
	\begin{example}
		\emph{This time, let $n=4$ and $G_4=\SAut(F_4)$. The matrices get significantly bigger for this case and for this account we shall restrict ourselves to the demonstration of the function $\phi$ only. The following $24$ transvections consitute the generators of $G_4$:
			\begin{eqnarray*}
				&&\lambda_{12},\lambda_{13},\lambda_{14},\lambda_{21},\lambda_{23},\lambda_{24},\lambda_{31},\lambda_{32},\lambda_{34},\lambda_{41},\lambda_{42},\lambda_{43},\\
				&&\rho_{12},\rho_{13},\rho_{14},\rho_{21},\rho_{23},\rho_{24},\rho_{31},\rho_{32},\rho_{34},\rho_{41},\rho_{42},\rho_{43}.
			\end{eqnarray*}
			Since there are already relatively many relations of $G_4$, we will not list them all. Instead, we comment on the values that the function $\phi$ takes on them. The simplex $C_4$ is a tetrahderon. The faces of $C_4$ contain six edges, four triangles and one tetrahedron:
			\begin{eqnarray*}
				&&\Edg_4=\{\{1,2\},\{1,3\},\{1,4\},\{2,3\},\{2,4\},\{3,4\}\}\\
				&&\Tri_4=\{\{1,2,3\},\{1,2,4\},\{1,3,4\},\{2,3,4\}\}\\
				&&\Tet_4=\{\{1,2,3,4\}\}.
			\end{eqnarray*}
			The function $\phi$ takes the following values on the generators (see \Cref{figure:sautf4}):
			\begin{eqnarray*}
				&&\phi(\lambda_{12})=\phi(\lambda_{21})=\phi(\rho_{12})=\phi(\rho_{21})=\{1,2\},\\
				&&\phi(\lambda_{13})=\phi(\lambda_{31})=\phi(\rho_{13})=\phi(\rho_{31})=\{1,3\},\\
				&&\phi(\lambda_{14})=\phi(\lambda_{41})=\phi(\rho_{14})=\phi(\rho_{41})=\{1,4\},\\
				&&\phi(\lambda_{23})=\phi(\lambda_{32})=\phi(\rho_{23})=\phi(\rho_{32})=\{2,3\},\\
				&&\phi(\lambda_{24})=\phi(\lambda_{42})=\phi(\rho_{24})=\phi(\rho_{42})=\{2,4\},\\
				&&\phi(\lambda_{34})=\phi(\lambda_{43})=\phi(\rho_{34})=\phi(\rho_{43})=\{3,4\}.
			\end{eqnarray*}
			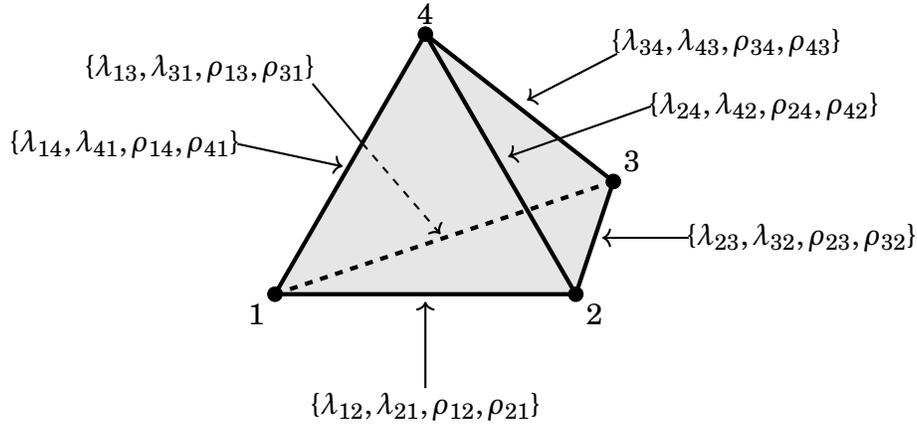
\begin{figure}[h]
				\centering
				\begin{tikzpicture}
					\coordinate (A) at (0,0);
					\coordinate (B) at (4,0);
					\coordinate (C) at (4.5,1.5);
					\coordinate (D) at (2,3.46);
					\coordinate (P) at (1.16,2.01);
					\draw[black, ultra thick] (A) -- (B);
					\draw[black, ultra thick, dashed] (A) -- (C);
					\draw[black, ultra thick] (B) -- (C);
					\draw[black, ultra thick] (A) -- (D);
					\draw[black, ultra thick] (B) -- (D);
					\draw[black, ultra thick] (C) -- (D);
					\filldraw[black] (A) circle (2.75pt) node[below left]{$1$};
					\filldraw[black] (B) circle (2.75pt) node[below right]{$2$};
					\filldraw[black] (C) circle (2.75pt) node[above right]{$3$};
					\filldraw[black] (D) circle (2.75pt) node[above]{$4$};
					\fill[black, opacity=0.1] (A) -- (B) -- (D) -- cycle;
					\fill[black, opacity=0.1] (B) -- (C) -- (D) -- cycle;
					\coordinate (e12) at (2,-1.5);
					\coordinate (e13) at (-1,3);
					\coordinate (e14) at (-2,2);
					\coordinate (e23) at (7,0.75);
					\coordinate (e24) at (6.5,2.5);
					\coordinate (e34) at (6,3.35);
					\node at (e12) {$\{\lambda_{12},\lambda_{21},\rho_{12},\rho_{21}\}$};
					\node at (e13) {$\{\lambda_{13},\lambda_{31},\rho_{13},\rho_{31}\}$};
					\node at (e14) {$\{\lambda_{14},\lambda_{41},\rho_{14},\rho_{41}\}$};
					\node at (e23) {$\{\lambda_{23},\lambda_{32},\rho_{23},\rho_{32}\}$};
					\node at (e24) {$\{\lambda_{24},\lambda_{42},\rho_{24},\rho_{42}\}$};
					\node at (e34) {$\{\lambda_{34},\lambda_{43},\rho_{34},\rho_{43}\}$};
					\draw[->,thick] (2,-1.25) -- (2,-0.1);
					\draw[thick] (0.5,2.8) -- (P);
					\draw[->,thick,dashed] (P) -- (2.2,0.8);
					\draw[->,thick] (-0.5,2) -- (0.9,1.7);
					\draw[->,thick] (5.5,0.75) -- (4.35,0.75);
					\draw[->,thick] (5,2.5) -- (3.1,1.83);
					\draw[->,thick] (4.55,3.2) -- (3.35,2.58);
				\end{tikzpicture}
				\caption{The simplex $C_4$ with the assignments of the generators of $\SAut(F_4)$ to its edges given by $\phi$.}
				\label{figure:sautf4}
		\end{figure}}
		
		\noindent
		\emph{Let $\leq i,j,k,l\leq 4$ be pairwise distinct. The values of $\phi$ on the relators are the following:
			\begin{eqnarray*}
				&&\phi([\lambda_{ij},\rho_{ij}])=\{i,j\},\quad\phi([\lambda_{ij},\lambda_{kl}])=\phi(\rho_{ij},\rho_{kl})=\phi([\lambda_{ij},\rho_{kl}])=\{1,2,3,4\},\\
				&&\phi([\lambda_{ij},\lambda_{kj}])=\phi([\rho_{ij},\rho_{kj}])=\phi([\lambda_{ij},\rho_{ik}])=\phi([\lambda_{i,j},\rho_{kj}])\\
				&&\qquad\qquad\quad\text{ }=\phi(\lambda_{ik}^{\pm}[\lambda_{jk}^{\pm},\lambda_{ij}^{-1}])=\phi(\rho_{ik}^{\pm}[\rho_{jk}^{\pm},\rho_{ij}^{-1}])\\
				&&\qquad\qquad\quad\text{ }=\phi(\lambda_{ik}^{\pm}[\rho_{jk}^{\mp},\lambda_{ij}])=\phi(\rho_{ik}^{\pm}[\lambda_{jk}^{\mp},\rho_{ij}])=\{i,j,k\}.
		\end{eqnarray*}}
	\end{example}
	
	\subsection{Sum of squares decomposition for square and opposite part}
	It turns out that the square and opposite parts behave well with respect to sum of squares decompositions. The precise statement is given below.
	\begin{lemma}\label{lemma:sq_op_sos}
		The matrices $\Sq^-_n$ and $\Op^+_n+2\Op^-_n$ are sums of squares.
	\end{lemma}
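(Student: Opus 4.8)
The plan is to treat the two pieces $\Sq^-_n$ and $\Op^+_n+2\Op^-_n$ separately, in each case exploiting the block structure dictated by the face map $\phi$. Recall that $\Sq^-_n$ is, by definition, supported on pairs of generators $(s,t)$ with $\phi(s)\cup\phi(t)\in\Edg_n$; since $\phi(s)$ and $\phi(t)$ are both edges, this forces $\phi(s)=\phi(t)$, so $\Sq^-_n$ is block diagonal with one block for each edge $e\in\Edg_n$, each block involving only the (finitely many) generators supported on $e$. On the block for a fixed edge $e=\{i,j\}$ the entries are $(\Delta^-_1)_{s,t}=(1-s)(1-t)^*$ for $s,t$ among the generators with index set $\{i,j\}$, which is precisely $v_e v_e^*$ where $v_e$ is the column vector whose entries are $1-s$ over those generators. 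Hence each block is of the form $v_e v_e^* = (v_e^*)^* v_e^*$, a single square, and $\Sq^-_n$, being block-diagonal with square blocks, is itself a sum of squares. (Equivalently one notes that $\Delta^-_1=dd^*$ globally, and restricting to each edge-block just takes a sub-column of $d$.)

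For $\Op^+_n+2\Op^-_n$ I would again decompose along the tetrahedra of $C_n$. The key observation is that both $\Op^+_n$ and $\Op^-_n$ are ``supported on tetrahedra'' in a compatible way: $(\Op^-_n)_{s,t}$ is nonzero only when $\phi(s)\cup\phi(t)\in\Tet_n$, and $(\Op^+_n)_{s,t}$ only when $s=t$ or $\phi(s)\cup\phi(t)\in\Tet_n$, with the off-diagonal contributions coming from relators $r$ with $\phi(r)\in\Tet_n$. Fixing a tetrahedron $T\in\Tet_n$ and restricting attention to the generators $s$ with $\phi(s)\subseteq T$, the corresponding block of $\Op^+_n$ equals $\sum_{\phi(r)=T}J_r^*J_r$ (a sum of squares by construction, each $J_r$ being a row), and the corresponding block of $\Op^-_n$ equals $w_T w_T^*$ for the appropriate sub-column $w_T$ of $d$. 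So on each $T$-block the matrix $\Op^+_n+2\Op^-_n$ restricts to a sum of the rank-one square $\sum_r J_r^* J_r$ and $2 w_T w_T^* = (\sqrt2\,w_T^*)^*(\sqrt2\,w_T^*)$, hence is a sum of squares. The only subtlety is bookkeeping: a single generator $s$ may lie on several tetrahedra, so the diagonal entry $(\Op^+_n)_{s,s}=\sum_{\phi(r)\in\Tet_n}|\partial r/\partial s|^2$ is split among the various $T$-blocks, but this is harmless — one simply groups the relators by their (unique) value $\phi(r)$ and the diagonal sum distributes accordingly, so the global matrix is the direct sum over $T$ of the blocks just described, each a sum of squares.

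The main obstacle — and it is modest — is verifying that the off-diagonal structure really is ``block-diagonal over faces'' as claimed, i.e. that there is no cross-talk between two distinct edges (resp.\ two distinct tetrahedra): for $\Sq^-_n$ this is immediate since $\phi(s)\cup\phi(t)$ being a two-element set forces $\phi(s)=\phi(t)$; for the opposite part one must check that $\phi(s)\cup\phi(t)\in\Tet_n$ together with the relator condition $\phi(r)\in\Tet_n$ pins down a single tetrahedron, which follows because a generator's index set has size two and a relator's support (for the commutator/pentagonal relators in play) has size at most three, so all indices involved in a nonzero $(s,t)$-entry lie in one common four-element set. Once this is in place, the argument is just: block-diagonalize, identify each block as a square plus a positive multiple of a rank-one square $ww^*$, and invoke the fact (from \cite{bader-nowak}) that $\Delta^\pm_1$ and their face-restrictions are built from Fox Jacobians and from $d$, hence manifestly of the form $M^*M$ on each block.
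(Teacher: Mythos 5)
Your treatment of $\Sq^-_n$ is correct and is essentially identical to the paper's argument: the support condition $\phi(s)\cup\phi(t)\in\Edg_n$ forces $\phi(s)=\phi(t)$, so the matrix splits into one rank-one square $d^{i,j}\left(d^{i,j}\right)^*$ per edge.

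Your treatment of $\Op^+_n+2\Op^-_n$, however, contains a genuine error. You claim that the block of $\Op^-_n$ corresponding to a tetrahedron $T$ equals $w_Tw_T^*$ for a suitable sub-column $w_T$ of $d$. This is false: $w_Tw_T^*$ has entry $(1-s)(1-t)^*$ at \emph{every} position $(s,t)$ with $\phi(s),\phi(t)\subseteq T$, in particular on the diagonal and at positions where $\phi(s)\cup\phi(t)$ is an edge or a triangle, whereas $\left(\Op^-_n\right)_{s,t}$ is nonzero only when $\phi(s)\cup\phi(t)$ has exactly four elements, i.e.\ when $\phi(s)$ and $\phi(t)$ are disjoint edges of $T$. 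A rank-one matrix $ww^*$ with $w$ having no zero entries has no zero entries at all, so no sub-column of $d$ reproduces this sparsity pattern, and the ``hollow'' matrix obtained by deleting the diagonal and triangle positions from $w_Tw_T^*$ is not positive. Indeed $2\Op^-_n$ on its own is not a sum of squares --- note that your argument would go through with any positive coefficient in place of $2$, which should have been a warning sign, since the specific coefficient is essential. The paper's actual mechanism is a cancellation, not block-positivity: for distinct $i,j,k,l$ the two commutator relators $[\alpha_{ij},\beta_{kl}]$ and $[\beta_{kl},\alpha_{ij}]$ contribute exactly $-2(1-\alpha_{ij})(1-\beta_{kl})^*$ to $\left(\Op^+_n\right)_{\alpha_{ij},\beta_{kl}}$, which cancels the corresponding entry of $2\Op^-_n$; what survives is the diagonal part of $\Op^+_n$, and a diagonal matrix with entries $\sum_r\left(\frac{\partial r}{\partial s}\right)^*\frac{\partial r}{\partial s}$ is manifestly a sum of squares. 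Without computing those off-diagonal Fox derivatives the lemma cannot be established. (A minor additional slip: the relators contributing to $\Op^+_n$ have $|\phi(r)|=4$, not ``at most three.'')
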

	\begin{proof}
		For any $1\leq i\neq j\leq n$, define $d^{i,j}\in\mathbb{M}_{|\altmathcal{S}_n|\times 1}(\RR G_n)$ to be the column vector indexed by the generators of $G_n$ with the only non-zero entries being $1-s$, indexed by the generators $s$ satisfying $\phi(s)=\{i,j\}$. That $\Sq^-_n$ is a sum of squares follows from the decomposition below:
		$$
		\Sq^-_n=\sum_{1\leq i\neq j\leq n}d^{i,j}\left(d^{i,j}\right)^*.
		$$
		
		We have already observed that both $\Op^+_n$ and $\Op^-_n$ (in this case, just by the definition) have vanishing $(s,t)$-entries whenever $\phi(s)\cup\phi(t)\in\Tri_n$ or the generator $t$ has the same indices as $s$ but with the reversed order. In addition, the diagonal vanishes for $\Op^-_n$ as well. This is not the case for $\Op^+_n$. However, since $\Op^+_n$ is a sum of squares by its definition, it follows that the same applies to its diagonal part. On the other hand, the non-diagonal parts of $\Op^+_n$ and $2\Op^-_n$ cancel out -- for distinct $1\leq i,j,k,l\leq n$ we have:
		\begin{align*}
			\left(\Op^+_n\right)_{\alpha_{ij},\beta_{kl}}&=\left(\frac{\partial[\alpha_{ij},\beta_{kl}]}{\partial \alpha_{ij}}\right)^*\frac{\partial[\alpha_{ij},\beta_{kl}]}{\partial \beta_{kl}}+\left(\frac{\partial[\beta_{kl},\alpha_{ij}]}{\partial \alpha_{ij}}\right)^*\frac{\partial[\beta_{kl},\alpha_{ij}]}{\partial \beta_{kl}}\\
			&=-2(1-\alpha_{ij})(1-\beta_{kl})^*.
		\end{align*} 
	\end{proof}
	\section{Symmetrization of the adjacent part}\label{section:symmetrization}
	In this section we show how to get the lower bound for spectral gap of $\Adj_n=\Adj_n^++\Adj_n^-$ for $G_n$ once we assume that the lower bound for the corresponding spectral gap for $G_m$ is known for some $n\geq m\geq 3$. Our method is inspired by the symmetrization method used in \cite{kkn}.
	
	\subsection{General setting for symmetrization}\label{subsection:general_setting}
	Note that the symmetric group $\Sym_k$ acts by automorphisms on $G_k$ by sending the generator $\omega_{i,j}$ to $\omega_{\sigma(i),\sigma(j)}$ for any permutation $\sigma\in \Sym_k$. This induces an action of $\Sym_k$ on $\RR G_k$ by automorphisms which preserves the $*$-involution. Using this action we can define the action on $\mathbb{M}_{|\altmathcal{S}_k|\times|\altmathcal{S}_k|}(\RR G_k)$ by the formula
	\[
	\left(\sigma A\right)_{s,t}=\sigma \left(A_{\sigma^{-1}(s),\sigma^{-1}(t)}\right),
	\]
	for any matrix $A$ over $\RR G_k$ indexed by the generators $\altmathcal{S}_k$.
	This is also the action by $*$-algebra automorphisms (the multiplication being the group ring matrix multiplication). Thus, for any matrix $A\in\mathbb{M}_{|\altmathcal{S}_k|\times|\altmathcal{S}_k|}(\RR G_k)$ and $\sigma\in \Sym_k$, the matrix $\sigma A$ is a sum of squares provided $A$ was a sum of squares.

	\subsection{Invariance of Laplacians}\label{subsection:inv}
	Let us show first that the adjacent Laplacians $\Adj_k^+$ and $\Adj^-_k$ for $G_k$ are $\Sym_k$-invariant. The statement for $\Adj^-_k$ is straightforward:
	\begin{align*}
		\left(\sigma\Adj_k^-\right)_{s,t}&=\sigma\left(\left(\Adj^-_k\right)_{\sigma^{-1}(s),\sigma^{-1}(t)}\right)=\sigma\left(\left(1-\sigma^{-1}(s)\right)\left(1-\sigma^{-1}(t)\right)^*\right)\\
		&=(1-s)(1-t)^*=\left(\Adj^-_k\right)_{s,t}.
	\end{align*}
	Let us show the invariance of $\Adj_k^+$. Let $a=|\altmathcal{S}_k|$ be the number of generators of $G_k$ and $b$ be the number of its relations $r$ such that $\phi(r)\in\Tri_k$. Suppose moreover that this set of relations is invariant with respect to the action of $\Sym_k$, that is, for all $\sigma\in \Sym_k$ and $r$ satisfying $\phi(r)\in\Tri_k$, the word $\sigma (r)$ is another relator in $\phi^{-1}(\Tri_k)$. Notice that each permutation $\sigma$ from $\Sym_k$ defines permutations of $\{1,\ldots,a\}$ and $\{1,\ldots,b\}$ (we fix the order of generators $\altmathcal{S}_k$ and relations $r$ such that $\phi(r)\in\Tri_k$). Having this, we can endow both products $\left(\RR G_k\right)^a$ and $\left(\RR G_k\right)^b$ with the action of $\Sym_k$ as follows:
	\begin{align*}
		\sigma\left(\xi_1,\ldots,\xi_a\right)&=\left(\sigma\xi_{\sigma^{-1}(1)},\ldots,\sigma\xi_{\sigma^{-1}(a)}\right),\\
		\sigma\left(\xi_1,\ldots,\xi_b\right)&=\left(\sigma\xi_{\sigma^{-1}(1)},\ldots,\sigma\xi_{\sigma^{-1}(b)}\right).
	\end{align*} 
	Finally, denote by $J$ the Jacobian map from $\left(\RR G_k\right)^a$ to $\left(\RR G_k\right)^b$ given by the relations $r$ such that $\phi(r)\in\Tri_k$, that is $\Adj_k^+=J^*J$. It turns out that the invariance of $\Adj_k^+$ follows from the $\Sym_k$-equivariance of $J$:
	\begin{lemma}\label{lemma:adj_invariance}
		For any $\sigma\in \Sym_k$ we have $\sigma\Adj_k^+=\Adj_k^+$ provided $J$ is equivariant.
	\end{lemma}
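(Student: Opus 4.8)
The plan is to unwind the definitions and reduce the claim to two facts which together are precisely what is packaged into the hypothesis ``$J$ is equivariant'': the $\Sym_k$-equivariance of the Fox derivatives, and the $\Sym_k$-invariance of the set of relators $r$ with $\phi(r)\in\Tri_k$. The only structural input needed is already recorded in the preceding paragraphs: each $\sigma\in\Sym_k$ acts on $\RR G_k$ by a $*$-algebra automorphism, so $\sigma$ is additive and $\sigma(\xi^*\eta)=\sigma(\xi)^*\sigma(\eta)$ for all $\xi,\eta\in\RR G_k$; and, since the relator set $\{r:\phi(r)\in\Tri_k\}$ is assumed $\Sym_k$-invariant, the map $r\mapsto\sigma(r)$ is a bijection of this set.

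The main step is an entrywise computation. Fix generators $s,t$ and a permutation $\sigma$. Using the definition of the matrix action, the definition of $\Adj_k^+$, additivity of $\sigma$, and the fact that $\sigma$ preserves the $*$-involution, one gets
\begin{align*}
	\left(\sigma\Adj_k^+\right)_{s,t}
	&=\sigma\left(\left(\Adj_k^+\right)_{\sigma^{-1}(s),\sigma^{-1}(t)}\right)
	=\sum_{r\,:\,\phi(r)\in\Tri_k}\sigma\left(\frac{\partial r}{\partial\sigma^{-1}(s)}\right)^{*}\sigma\left(\frac{\partial r}{\partial\sigma^{-1}(t)}\right).
\end{align*}
Now equivariance of $J$, written out on components, is exactly the identity $\sigma\bigl(\partial r/\partial\sigma^{-1}(s)\bigr)=\partial(\sigma r)/\partial s$ for every generator $s$ and every relator $r$ with $\phi(r)\in\Tri_k$ (here one uses the invariance of the relator set to replace $r$ by $\sigma(r)$ in the resulting relation). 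Substituting this and then reindexing the sum by $r'=\sigma(r)$ yields
\begin{align*}
	\left(\sigma\Adj_k^+\right)_{s,t}
	=\sum_{r\,:\,\phi(r)\in\Tri_k}\left(\frac{\partial(\sigma r)}{\partial s}\right)^{*}\frac{\partial(\sigma r)}{\partial t}
	=\sum_{r'\,:\,\phi(r')\in\Tri_k}\left(\frac{\partial r'}{\partial s}\right)^{*}\frac{\partial r'}{\partial t}
	=\left(\Adj_k^+\right)_{s,t},
\end{align*}
which is the asserted equality $\sigma\Adj_k^+=\Adj_k^+$.

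Equivalently, and perhaps more conceptually, one may first observe that the matrix action of $\Sym_k$ extends to the rectangular matrix $J$ (this extension exists precisely because the relator set is $\Sym_k$-invariant), is multiplicative and commutes with $*$ on rectangular matrices, so $\sigma\Adj_k^+=\sigma(J^*J)=(\sigma J)^*(\sigma J)$; and then note that, under the identification of $\sigma J$ with the operator $T_\sigma\circ J\circ T_\sigma^{-1}$, where $T_\sigma$ denotes the $\Sym_k$-action on $(\RR G_k)^a$ and $(\RR G_k)^b$, equivariance of $J$ is exactly the statement $\sigma J=J$. The only thing requiring care in either approach is the index bookkeeping in these translations; there is no genuine obstacle internal to this lemma, because the substantive point — that $J$ really is equivariant, i.e.\ that the Fox derivatives of the chosen relators transform correctly under $\Sym_k$ and that this relator set is $\Sym_k$-invariant — is deliberately isolated as the hypothesis and will be verified separately.
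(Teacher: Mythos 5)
Your proof is correct and follows essentially the same route as the paper's: both unwind $\left(\sigma\Adj_k^+\right)_{s,t}$ entrywise, invoke the componentwise form of equivariance (your identity $\sigma\bigl(\partial r/\partial\sigma^{-1}(s)\bigr)=\partial(\sigma r)/\partial s$ is exactly the paper's $J_{y,\sigma^{-1}(x)}=\sigma^{-1}J_{\sigma(y),x}$ after applying $\sigma$), and reindex the sum over relators by $r\mapsto\sigma(r)$. The only cosmetic difference is that the paper works with numerical indices for generators and relators while you work with the relators directly, and you additionally record the (equivalent) operator-theoretic reformulation $\sigma J=J$; neither changes the substance.
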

	\begin{proof}
		Suppose $J$ is equivariant. Let $\sigma\in \Sym_k$. Since we have ordered the generators in $\altmathcal{S}_k$, we can identify them with the corresponding indices from the set $\{1,\ldots,a\}$. Thus, we must show that $\left(\sigma\Adj_k^+\right)_{i,j}=\left(\Adj_k^+\right)_{i,j}$ for any $1\leq i,j\leq a$. 
		
		It is easy to check that the equivariance of $J$ is equivalent to the following condition: 
		$
		J_{y,\sigma^{-1}(x)}=\sigma^{-1}J_{\sigma(y),x}
		$
		for any $1\leq x\leq a$ and $1\leq y\leq b$. Using this relationship, we get
		\begin{align*}
			\left(\sigma\Adj_k^+\right)_{i,j}&=\sigma\left(\sum_{l}\left(J_{l,\sigma^{-1}(i)}\right)^*J_{l,\sigma^{-1}(j)}\right)=\sigma\left(\sum_{l}\sigma^{-1}\left(\left(J_{\sigma(l),i}\right)^*J_{\sigma(l),j}\right)\right)\\
			&=\sum_{l}\left(J_{\sigma(l),i}\right)^*J_{\sigma(l),j}=\left(\Adj_k^+\right)_{i,j}.
		\end{align*}
	\end{proof}
	The following lemma is the key tool to prove the equivariance of $J$.
	\begin{lemma}\label{lemma:auxiliaryJac}
		For any $\sigma\in \Sym_N$, $s\in\{s_1,\ldots,s_N\}$ and $r$ a word in the free group $F_N$ generated by $\{s_1,\ldots,s_N\}$, the following holds:
		$$
		\frac{\partial r}{\partial(\sigma (s))}=\sigma\left(\frac{\partial\left(\sigma^{-1}(r)\right)}{\partial s}\right),
		$$
		where the action of $\Sym_N$ on $F_N$ is given by $\tau(s_i)=s_{\tau(i)}$ for any $\tau\in\Sym_N$.
	\end{lemma}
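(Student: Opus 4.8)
The plan is to prove the identity by induction on the word length of $r$, using only the defining axioms of the Fox derivatives together with the elementary fact that the $\Sym_N$-action on $\ZZ F_N$ (extending the substitution $s_i\mapsto s_{\tau(i)}$ by $\ZZ$-linearity and multiplicativity) is by ring automorphisms, so that it commutes with sums and preserves products.

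For the base of the induction I would dispose of $r=1$, where both sides vanish, and $r=s_i$. Writing $s=s_k$, so that $\sigma(s)=s_{\sigma(k)}$, the left-hand side is $\partial s_i/\partial s_{\sigma(k)}=\delta_{i,\sigma(k)}$, while $\sigma^{-1}(r)=s_{\sigma^{-1}(i)}$ gives $\partial s_{\sigma^{-1}(i)}/\partial s_k=\delta_{\sigma^{-1}(i),k}$, and applying $\sigma$ to this integer leaves it unchanged; since $\delta_{\sigma^{-1}(i),k}=\delta_{i,\sigma(k)}$, the two sides agree. For the inductive step I would factor $r=s_i^{\varepsilon}w$ with $\varepsilon\in\{\pm 1\}$ and $w$ a strictly shorter word, apply the product rule $\partial(uv)/\partial s_j=\partial u/\partial s_j+u\,\partial v/\partial s_j$ on the left, and on the right use that $\sigma^{-1}(r)=s_{\sigma^{-1}(i)}^{\varepsilon}\,\sigma^{-1}(w)$, expand again by the product rule, and then distribute the automorphism $\sigma$ over the resulting sum and product. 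The computation reduces to the two facts $\sigma\bigl(s_{\sigma^{-1}(i)}^{\varepsilon}\bigr)=s_i^{\varepsilon}$ and the already-established identity for the one-letter words $s_i^{\pm 1}$, the case $\varepsilon=-1$ following from $\varepsilon=1$ via $\partial s_i^{-1}/\partial s_j=-s_i^{-1}\,\partial s_i/\partial s_j$, an identity that is visibly compatible with applying $\sigma$.

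A cleaner way to package the same argument, which I would likely use in the write-up, is to note that both sides, regarded as functions of $r$, are derivations $F_N\to\ZZ F_N$ in the sense that $D(uv)=D(u)+u\,D(v)$: the left-hand side by the very definition of the Fox derivative, and the right-hand side because $r\mapsto\sigma\bigl(\partial\sigma^{-1}(r)/\partial s\bigr)$ inherits the Leibniz rule from $\partial/\partial s$ after pushing the multiplicative map $\sigma$ through the product (here one uses $\sigma\circ\sigma^{-1}=\id$). A derivation $F_N\to\ZZ F_N$ is uniquely determined by its values on the free generators $s_1,\dots,s_N$, so the whole statement collapses to the generator check already carried out in the base case.

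I do not expect a genuine obstacle here; the only delicate points are bookkeeping ones, namely keeping $\sigma^{-1}(i)$ and $\sigma(i)$ straight, correctly handling the inverse letters, and invoking that $\Sym_N$ acts by $*$-algebra automorphisms (in particular ring automorphisms). This lemma is the computational engine behind the equivariance of $J$ used in \Cref{lemma:adj_invariance}, and in the applications it will be specialized to the $\SL_n(\ZZ)$ and $\SAut(F_n)$ presentations, where $\sigma^{-1}(r)$ is again one of the chosen commutator or pentagonal relators, so that equivariance of the corresponding Jacobian follows.
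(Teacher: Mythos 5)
Your proposal is correct and follows essentially the same route as the paper: induction on the word length of $r$, with the base case checked on generators (and their inverses) and the inductive step carried out via the Fox product rule together with the fact that $\sigma$ acts by ring automorphisms. Your alternative packaging---observing that both sides are derivations $F_N\to\ZZ F_N$ and hence determined by their values on the free generators---is a clean reformulation of the same induction rather than a genuinely different argument.
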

	\begin{proof}
		We prove the assertion by induction on the word length of the relation $r$. Note first that both sides of the equation in question vanish for $r=1$. Suppose $r$ is a single generator $s'$ (if $r$ is the inverse of a generator, the proof is analogous). In the case $\sigma(s)\notin\{s',(s')^{-1}\}$ both sides of the equation in question vanish. If $\sigma(s)=s'$, then
		$$
		\frac{\partial r}{\partial(\sigma (s))}=\frac{\partial s'}{\partial s'}=1=\sigma\left(\frac{\partial s}{\partial s}\right)=\sigma\left(\frac{\partial\left(\sigma^{-1}(r)\right)}{\partial s}\right),
		$$
		while if $\sigma(s)=(s')^{-1}$, then
		$$
		\frac{\partial r}{\partial(\sigma (s))}=\frac{\partial s'}{\partial (s')^{-1}}=-s'=\sigma\left(\frac{\partial s^{-1}}{\partial s}\right)=\sigma\left(\frac{\partial\left(\sigma^{-1}(r)\right)}{\partial s}\right).
		$$
		Suppose now the assertion follows for all words of length $k\geq 1$. Take a word $r$ of length $k+1$. Then $r=uv$ for some words $u$ and $v$ of length at most $k$. Using the induction assumption, we get
		\begin{align*}
			\frac{\partial r}{\partial(\sigma (s))}&=\frac{\partial u}{\partial(\sigma (s))}+u\frac{\partial v}{\partial(\sigma (s))}\\
			&=\sigma\left(\frac{\partial\left(\sigma^{-1}(u)\right)}{\partial s}+\sigma^{-1}(u)\frac{\partial\left(\sigma^{-1}(v)\right)}{\partial s}\right)\\
			&=\sigma\left(\frac{\partial\left(\sigma^{-1}(uv)\right)}{\partial s}\right)=\sigma\left(\frac{\partial\left(\sigma^{-1}(r)\right)}{\partial s}\right).
		\end{align*} 
	\end{proof}
	\begin{corollary}\label{corollary:jacobianInvariance}
		The Jacobian homomorphism $J:(\mathbb{R}G)^a\rightarrow(\mathbb{R}G)^b$ is an equivariant map.
	\end{corollary}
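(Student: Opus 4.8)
The plan is to deduce \Cref{corollary:jacobianInvariance} directly from \Cref{lemma:auxiliaryJac} together with the reformulation of equivariance recorded inside the proof of \Cref{lemma:adj_invariance}. Recall that the Jacobian $J$ here is the map $(\RR G)^a\to(\RR G)^b$ whose $(y,x)$-entry is $J_{y,x}=\frac{\partial r_y}{\partial s_x}$, where $s_1,\dots,s_a$ is the fixed ordering of $\altmathcal{S}_k$ and $r_1,\dots,r_b$ is the fixed ordering of those relators $r$ with $\phi(r)\in\Tri_k$; both the generator set and this relator set are $\Sym_k$-invariant, so a permutation $\sigma$ induces permutations of $\{1,\dots,a\}$ and of $\{1,\dots,b\}$, which I will also denote $\sigma$. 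As noted in the proof of \Cref{lemma:adj_invariance}, $J$ being equivariant is equivalent to the entrywise identity
\[
J_{y,\sigma^{-1}(x)}=\sigma^{-1}\!\left(J_{\sigma(y),x}\right)\qquad\text{for all }1\le x\le a,\ 1\le y\le b.
\]
So it suffices to establish this identity.

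First I would unwind both sides in terms of Fox derivatives. The left-hand side is $\frac{\partial r_y}{\partial s_{\sigma^{-1}(x)}}$, where the generator indexed by $\sigma^{-1}(x)$ is precisely $\sigma^{-1}(s_x)$ — this is exactly the statement that the $\Sym_k$-action on generators and the induced permutation of the index set $\{1,\dots,a\}$ agree. On the right-hand side, the relator indexed by $\sigma(y)$ is $\sigma(r_y)$, again because the action on relators matches the induced permutation of $\{1,\dots,b\}$; hence the right-hand side is $\sigma^{-1}\!\left(\frac{\partial(\sigma(r_y))}{\partial s_x}\right)$. Thus the identity to prove reads
\[
\frac{\partial r_y}{\partial(\sigma^{-1}(s_x))}=\sigma^{-1}\!\left(\frac{\partial(\sigma(r_y))}{\partial s_x}\right).
\]
Now I would apply \Cref{lemma:auxiliaryJac} with $N=k$, with the permutation $\sigma^{-1}$ in place of $\sigma$, with $s=s_x$, and with $r=r_y$: the lemma gives exactly $\frac{\partial r_y}{\partial(\sigma^{-1}(s_x))}=\sigma^{-1}\!\left(\frac{\partial(\sigma(r_y))}{\partial s_x}\right)$, since $(\sigma^{-1})^{-1}=\sigma$. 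This is the desired equality, so $J$ is equivariant.

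The only genuinely substantive point — and the step I expect to need the most care — is the bookkeeping that identifies the abstract permutations of $\{1,\dots,a\}$ and $\{1,\dots,b\}$ induced by $\sigma$ with the concrete combinatorial action $\omega_{ij}\mapsto\omega_{\sigma(i)\sigma(j)}$ on generators and the word-level action $r\mapsto\sigma(r)$ on relators; once the indexing conventions from \Cref{subsection:inv} are spelled out, this is a definitional matter, but it must be stated cleanly so that the substitution into \Cref{lemma:auxiliaryJac} is legitimate. One should also remark that the relator set $\{r:\phi(r)\in\Tri_k\}$ is indeed $\Sym_k$-invariant: this is clear from the explicit presentations, because permuting indices sends a commutator or pentagonal relator supported on a triangle $\{i,j,k\}$ to another such relator supported on $\{\sigma(i),\sigma(j),\sigma(k)\}$, and $\phi(\sigma(r))=\sigma(\phi(r))$. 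With these observations in place, the corollary is immediate from \Cref{lemma:auxiliaryJac}, and \Cref{lemma:adj_invariance} then yields $\Sym_k$-invariance of $\Adj_k^+$.
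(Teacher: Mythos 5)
Your proposal is correct and follows essentially the same route as the paper: both reduce equivariance to the entrywise identity $J_{y,\sigma^{-1}(x)}=\sigma^{-1}\left(J_{\sigma(y),x}\right)$ and obtain it by applying \Cref{lemma:auxiliaryJac} with $\sigma^{-1}$ in place of $\sigma$. The extra bookkeeping you flag (matching the index permutations with the actions on generators and relators, and the $\Sym_k$-invariance of the relator set) is left implicit in the paper but is exactly the right point to make explicit.
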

	\begin{proof}
		By Lemma \ref{lemma:auxiliaryJac}, for any $1\leq x\leq a$ and $1\leq y\leq b$, we get 
		\begin{align*}
			J_{y,\sigma^{-1}(x)}=\frac{\partial (r_y)}{\partial\left(\sigma^{-1}(s_x)\right)}=\sigma^{-1}\left(\frac{\partial \left(\sigma (r_y)\right)}{\partial s_x}\right)=\sigma^{-1}J_{\sigma(y),x}.
		\end{align*}
	\end{proof}
	\subsection{Adjacent part symmetrizes well}
	In this section we show how to express $\Adj_n^{\pm}$ in terms of $\Adj_m^{\pm}$ and the action of the symmetric group $\Sym_n$, for any $n\geq m\geq 3$. This allows us to induce spectral gaps for $\Adj$ parts (cf. \Cref{corollary:main_ind}).
	
	For any $n\geq 3$ and $\theta\in\Tri_n$, we denote by $\Adj^{\pm}_{\theta}\in\mathbb{M}_{|\altmathcal{S}_n|\times|\altmathcal{S}_n|}(\RR G_n)$ the embedding of the adjacent part given by the traingle $\theta$ of $C_n$, i.e., for any generators $s$ and $t$ of $G_n$, we have
	\begin{align*}
		\left(\Adj_{\theta}^-\right)_{s,t}=\begin{cases}
			(1-s)(1-t)^* & \text{if } \phi(s)\cup\phi(t)=\theta \\
			0 & \text{if } \phi(s)\cup\phi(t)\neq\theta
		\end{cases},\quad
		\left(\Adj_{\theta}^+\right)_{s,t}=\sum_{\phi(r)=\theta}\left(\frac{\partial r}{\partial s}\right)^*\frac{\partial r}{\partial t}.
	\end{align*}
	Since it shall be clear from the context, given $m\leq n$, we shall also denote by $\Adj_m^{\pm}$ the canonical embedding of $\Adj_m^{\pm}\in\mathbb{M}_{|\altmathcal{S}_m|\times|\altmathcal{S}_m|}(\RR G_m)$ into $\mathbb{M}_{|\altmathcal{S}_n|\times|\altmathcal{S}_n|}(\RR G_n)$:
	\begin{align*}
		\left(\Adj_m^-\right)_{s,t}&=\begin{cases}
			(1-s)(1-t)^* & \text{if } \phi(s)\cup\phi(t)\subseteq\{1,\ldots,m\} \\
			0 & \text{if } \phi(s)\cup\phi(t)\nsubseteq\{1,\ldots,m\}
		\end{cases},\\
		\left(\Adj_m^+\right)_{s,t}&=\sum_{\phi(r)\subseteq\{1,\ldots,m\}}\left(\frac{\partial r}{\partial s}\right)^*\frac{\partial r}{\partial t}.
	\end{align*}
	Due to the same reason, we decided to remove the grading $n$ from the notation $\Adj_{\theta}^{\pm}$.
	\begin{lemma}\label{lemma:nice_embeddings}
		For $n\geq m\geq 3$, one has
		$$
		\sum_{\sigma\in\Sym_n}\sigma\left(\Adj_m^{\pm}\right)=m(m-1)(m-2)(n-3)!\Adj_n^{\pm}.
		$$ 
	\end{lemma}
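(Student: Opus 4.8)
The plan is to compute both sides entrywise and compare. Fix generators $s,t$ of $G_n$ and compare the $(s,t)$-entries of $\sum_{\sigma\in\Sym_n}\sigma(\Adj_m^\pm)$ and of $m(m-1)(m-2)(n-3)!\,\Adj_n^\pm$. By the definition of the $\Sym_n$-action on matrices, $\left(\sigma(\Adj_m^\pm)\right)_{s,t}=\sigma\left((\Adj_m^\pm)_{\sigma^{-1}(s),\sigma^{-1}(t)}\right)$, and the entry $(\Adj_m^\pm)_{\sigma^{-1}(s),\sigma^{-1}(t)}$ is nonzero only when $\phi(\sigma^{-1}(s))\cup\phi(\sigma^{-1}(t))\subseteq\{1,\dots,m\}$, equivalently $\sigma^{-1}\big(\phi(s)\cup\phi(t)\big)\subseteq\{1,\dots,m\}$ since $\phi$ is plainly $\Sym_n$-equivariant in the sense $\phi(\sigma^{-1}(s))=\sigma^{-1}(\phi(s))$. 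So the sum over $\sigma$ restricts to those permutations carrying the support set $U:=\phi(s)\cup\phi(t)$ into $\{1,\dots,m\}$, and on each such $\sigma$ the summand is exactly $\sigma\sigma^{-1}$ applied to the corresponding $\Adj_m$-entry — I will verify using \Cref{lemma:auxiliaryJac} (for the $+$ part) and the explicit formula $(1-s)(1-t)^*$ (for the $-$ part) that $\sigma\left((\Adj_m^\pm)_{\sigma^{-1}(s),\sigma^{-1}(t)}\right)$ equals $(\Adj_n^\pm)_{s,t}$ whenever $\sigma^{-1}(U)\subseteq\{1,\dots,m\}$. The $-$ case is immediate from the same one-line computation already used to prove $\Sym_k$-invariance of $\Adj_k^-$; the $+$ case follows because \Cref{lemma:auxiliaryJac} shows $\frac{\partial r}{\partial(\sigma s)}=\sigma\left(\frac{\partial(\sigma^{-1}r)}{\partial s}\right)$, and as $r$ ranges over relators with $\phi(r)\subseteq\{1,\dots,m\}$, the words $\sigma^{-1}(r)$ range exactly over the relators with $\phi\subseteq\sigma^{-1}(\{1,\dots,m\})\supseteq U\cup\phi(r)$; here I use that the relator set of $G_n$ is $\Sym_n$-invariant and that a triangle relator is determined by its face, so summing over $\phi(r)=\theta$ for the three triangles inside any tetrahedron-or-smaller face behaves correctly.

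Granting that each nonzero summand equals $(\Adj_n^\pm)_{s,t}$, the only remaining task is to count the permutations $\sigma\in\Sym_n$ with $\sigma^{-1}(U)\subseteq\{1,\dots,m\}$, where $U=\phi(s)\cup\phi(t)$. Here there are two cases matching the structure of $\Adj_n^\pm$: either $|U|\le 2$ (the "diagonal-like" entries, where $s,t$ lie on a common edge) or $|U|=3$. In the key case $|U|=3$, the number of $\sigma$ with $\sigma^{-1}(U)\subseteq\{1,\dots,m\}$ is: choose the ordered image of the $3$ elements of $U$ inside $\{1,\dots,m\}$ in $m(m-1)(m-2)$ ways, then permute the remaining $n-3$ symbols freely in $(n-3)!$ ways, giving exactly $m(m-1)(m-2)(n-3)!$. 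This is precisely the claimed coefficient. For $|U|\le 2$ I must check the count is the same: when $|U|=2$ the naive count is $m(m-1)(n-2)!$, and the identity $m(m-1)(n-2)! = m(m-1)(m-2)(n-3)!\cdot\frac{n-2}{m-2}$ does \emph{not} match in general — so I expect that on these entries $(\Adj_n^\pm)_{s,t}$ is actually built from summing $\Adj_\theta^\pm$ over all triangles $\theta\supseteq U$, and the bookkeeping is: $\sum_\sigma \sigma(\Adj_m^\pm)$ on such an entry picks up, for each of the $n-2$ triangles $\theta$ of $C_n$ containing $U$, those $\sigma$ with $\sigma^{-1}(\theta)\subseteq\{1,\dots,m\}$, of which there are $m(m-1)(m-2)(n-3)!/(n-2)\cdot(\text{something})$; I will instead organize the count so that the total is visibly $m(m-1)(m-2)(n-3)!$ times $(\Adj_n^\pm)_{s,t}$ by writing $\Adj_n^\pm=\sum_{\theta\in\Tri_n}\Adj_\theta^\pm$ and $\Adj_m^\pm=\sum_{\theta\in\Tri_n,\ \theta\subseteq\{1,\dots,m\}}\Adj_\theta^\pm$, then using that $\Sym_n$ acts transitively on $\Tri_n$ with each triangle having exactly $m(m-1)(m-2)(n-3)!/|\Tri_n| \cdot |\Tri_n|$-normalized...

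Cleanly: the robust route is to decompose $\Adj_m^\pm=\sum_{\theta\subseteq\{1,\dots,m\},\,|\theta|=3}\Adj_\theta^\pm$, observe $\sigma(\Adj_\theta^\pm)=\Adj_{\sigma(\theta)}^\pm$ (a triangle-indexed version of the invariance lemmas, proved by the same entrywise computation with \Cref{lemma:auxiliaryJac}), hence $\sum_{\sigma\in\Sym_n}\sigma(\Adj_m^\pm)=\sum_{\sigma\in\Sym_n}\ \sum_{\substack{\theta\subseteq\{1,\dots,m\}\\ |\theta|=3}}\Adj_{\sigma(\theta)}^\pm$, and for a fixed triangle $\theta'\in\Tri_n$ the number of pairs $(\sigma,\theta)$ with $\theta\subseteq\{1,\dots,m\}$, $|\theta|=3$, $\sigma(\theta)=\theta'$ is exactly $\binom{m}{3}\cdot\big(3!(n-3)!\big) = m(m-1)(m-2)(n-3)!$ — independent of $\theta'$ — since each of the $\binom{m}{3}$ source triangles contributes $3!(n-3)!$ permutations sending it onto $\theta'$. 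Therefore $\sum_{\sigma\in\Sym_n}\sigma(\Adj_m^\pm)=m(m-1)(m-2)(n-3)!\sum_{\theta'\in\Tri_n}\Adj_{\theta'}^\pm=m(m-1)(m-2)(n-3)!\,\Adj_n^\pm$. The main obstacle is the first step — establishing $\sigma(\Adj_\theta^\pm)=\Adj_{\sigma(\theta)}^\pm$ — for which \Cref{lemma:auxiliaryJac}, \Cref{corollary:jacobianInvariance}, and the $\Sym_n$-invariance of the relator set do all the real work; the counting is then a one-line orbit-counting argument.
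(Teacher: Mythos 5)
Your final, ``clean'' argument is correct, and its core coincides with the paper's proof: both rest on the transfer identity $\sigma\left(\Adj_{\theta}^{\pm}\right)=\Adj_{\sigma(\theta)}^{\pm}$ (proved via \Cref{lemma:auxiliaryJac} for the $+$ part) together with the decomposition $\Adj_k^{\pm}=\sum_{\theta\in\Tri_k}\Adj_{\theta}^{\pm}$. You differ only in the final bookkeeping. The paper routes the count through $\Adj_3^{\pm}$ via the orbit--stabilizer theorem, splits $\sum_{\sigma\in\Sym_n}$ over coset representatives of $\Sym_n/\Sym_m$, and then needs the $\Sym_m$-invariance of $\Adj_m^{\pm}$ (\Cref{lemma:adj_invariance}, \Cref{corollary:jacobianInvariance}) to reassemble $\Adj_m^{\pm}$ from $\Adj_3^{\pm}$. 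You instead fix a target triangle $\theta'\in\Tri_n$ and count pairs $(\sigma,\theta)$ with $\theta\subseteq\{1,\ldots,m\}$, $|\theta|=3$, $\sigma(\theta)=\theta'$, obtaining $\binom{m}{3}\cdot 3!\,(n-3)!=m(m-1)(m-2)(n-3)!$ independently of $\theta'$; this double count is correct, is slightly shorter, and makes the invariance of $\Adj_m^{\pm}$ unnecessary as a separate ingredient. One editorial point: the first two paragraphs of your write-up (the abandoned entrywise comparison) should be deleted rather than kept as motivation --- you correctly diagnose there that the naive entrywise count fails on edge-supported entries, where $\left(\Adj_n^{\pm}\right)_{s,t}$ aggregates contributions from the $n-2$ triangles containing $\phi(s)\cup\phi(t)$ while each $\left(\Adj_m^{\pm}\right)_{\sigma^{-1}(s),\sigma^{-1}(t)}$ aggregates only $m-2$ of them, and the triangle-indexed double count is precisely the repair; only that last version constitutes the proof.
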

	\begin{proof}
		The idea of the proof is to transfer the action of the symmetric group $\Sym_n$ from the matrices over group rings to the simplex $C_n$ and apply the invariance of $\Adj_m^{\pm}$ under the action of $\Sym_m$. The transfer is possible due to the following relationships:
		\begin{equation}\label{equation:simplex_transfer}
			\sigma\left(\Adj_{\theta}^{\pm}\right)=\Adj_{\sigma(\theta)}^{\pm},
		\end{equation}
		holding for every permutation $\sigma\in\Sym_n$ and a triangle $\theta\in\Tri_n$. Let us show (\ref{equation:simplex_transfer}) for $\Adj^+$ only since the proof for $\Adj^-$ is a direct application of its definition and the action of $\Sym_n$ on $\mathbb{M}_{|\altmathcal{S}_n|\times|\altmathcal{S}_n|}(\RR G_n)$. Pick two generators $s$ and $t$ of $G_n$. Then, it follows by \Cref{lemma:auxiliaryJac} that
		\begin{align*}
			\left(\sigma\left(\Adj_{\theta}^+\right)\right)_{s,t}
			&=\sigma\left(\sum_{\phi(r)=\theta}\left(\frac{\partial r}{\partial\sigma^{-1}(s)}\right)^*\frac{\partial r}{\partial\sigma^{-1}(t)}\right)\\
			&=\sum_{\phi(r)=\sigma(\theta)}\left(\frac{\partial r}{\partial s}\right)^*\frac{\partial r}{\partial t}=\Adj_{\sigma(\theta)}^+.
		\end{align*}
		Note that, for any $k\geq 3$, we can express $\Adj_k^{\pm}$ by summing its projections to the triangles of $C_k$:
		\begin{align*}
			\Adj_k^{\pm}=\sum_{\theta\in\Tri_k}\Adj_{\theta}^{\pm}.
		\end{align*}
		Applying (\ref{equation:simplex_transfer}) and the orbit-stabilizer theorem applied to the action of $\Sym_k$ on the triangles $\Tri_k$, we get
		\begin{align}\label{align:genaral_adj}
			\Adj_k^{\pm}=\sum_{\sigma\in\Sym_k}\frac{1}{|\Stab\{1,2,3\}|}\Adj_{\sigma(\{1,2,3\})}^{\pm}=\frac{1}{(k-3)!}\sum_{\sigma\in\Sym_k}\sigma\left(\Adj_3^{\pm}\right).
		\end{align}
		Let $\tau_i\in\Sym_n$, $i=1,\ldots,\frac{n!}{m!}$ be the representatives of the cosets $\Sym_n/\Sym_m$. Applying (\ref{align:genaral_adj}) first for $k=n$ and then for $k=m$, we get
		\begin{align*}
			(n-3)!\Adj^{\pm}_n&=\sum_{\sigma\in\Sym_n}\sigma\left(\Adj^{\pm}_3\right)\\
			&=\sum_{i}\tau_i\sum_{\tau\in\Sym_m}\tau\left(\Adj^{\pm}_3\right)\\
			&=(m-3)!\sum_{i}\tau_i\Adj^{\pm}_m.
		\end{align*}
		We conclude the proof by applying $\Sym_m$-invariance of $\Adj^{\pm}_m$ (cf. \Cref{lemma:adj_invariance} and \Cref{corollary:jacobianInvariance}):
		\begin{align*}
			(n-3)!\Adj^{\pm}_n&=(m-3)!\sum_{i}\tau_i\Adj^{\pm}_m\\
			&=(m-3)!\sum_{i}\tau_i\frac{1}{m!}\sum_{\sigma\in\Sym_m}\sigma\left(\Adj^{\pm}_m\right)\\
			&=\frac{1}{m(m-1)(m-2)}\sum_{\sigma\in\Sym_n}\sigma\left(\Adj^{\pm}_m\right).
		\end{align*}
	\end{proof}
	For any $k\geq 3$, let $\Adj_k=\Adj_k^++\Adj_k^-$. We have the following
	\begin{corollary}[cf. \Cref{theorem:main0}]\label{corollary:main_ind}
		The matrix 
		\begin{align*}
			\Adj_n-\frac{n-2}{m-2}\lambda I_{|\altmathcal{S}_n|}
		\end{align*}
		is a sum of squares in $\mathbb{M}_{|\altmathcal{S}_n|\times|\altmathcal{S}_n|}(\RR G_n)$, provided $\Adj_m-\lambda I_{|\altmathcal{S}_m|}$ is a sum of squares in $\mathbb{M}_{|\altmathcal{S}_m|\times|\altmathcal{S}_m|}(\RR G_m)$.
	\end{corollary}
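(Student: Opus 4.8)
The plan is to combine \Cref{lemma:nice_embeddings} with the basic fact, recorded in \cref{section:symmetrization}, that the $\Sym_n$-action on $\mathbb{M}_{|\altmathcal{S}_n|\times|\altmathcal{S}_n|}(\RR G_n)$ is by $*$-algebra automorphisms and hence preserves the property of being a sum of squares. First I would assume $\Adj_m-\lambda I_{|\altmathcal{S}_m|}$ is a sum of squares in $\mathbb{M}_{|\altmathcal{S}_m|\times|\altmathcal{S}_m|}(\RR G_m)$ and view this matrix, via the canonical embedding described before \Cref{lemma:nice_embeddings}, inside $\mathbb{M}_{|\altmathcal{S}_n|\times|\altmathcal{S}_n|}(\RR G_n)$; the embedding preserves sums of squares since it is (block-)inclusion compatible with the $*$-operation, and one checks that the identity $I_{|\altmathcal{S}_m|}$ embeds as the diagonal projection $P$ onto the coordinates indexed by generators $s$ with $\phi(s)\subseteq\{1,\dots,m\}$. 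So $\Adj_m - \lambda P$ is a sum of squares in the larger matrix ring.

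Next I would apply each $\sigma\in\Sym_n$ to this matrix: since $\sigma$ acts by a $*$-algebra automorphism, $\sigma(\Adj_m)-\lambda\,\sigma(P)$ is again a sum of squares for every $\sigma$, and a sum of finitely many sums of squares is a sum of squares, so
\[
\sum_{\sigma\in\Sym_n}\sigma(\Adj_m) \;-\; \lambda\sum_{\sigma\in\Sym_n}\sigma(P)
\]
is a sum of squares in $\mathbb{M}_{|\altmathcal{S}_n|\times|\altmathcal{S}_n|}(\RR G_n)$. By \Cref{lemma:nice_embeddings} the first sum equals $m(m-1)(m-2)(n-3)!\,\Adj_n$ (applying the lemma to $\Adj_m^+$ and $\Adj_m^-$ separately and adding). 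For the second sum I would run the same orbit-counting argument used in the proof of \Cref{lemma:nice_embeddings}: $\sum_{\sigma\in\Sym_n}\sigma(P)$ is $\Sym_n$-invariant and diagonal, and each diagonal entry indexed by a generator $s$ gets the same count, namely the number of $\sigma$ with $\phi(\sigma^{-1}(s))\subseteq\{1,\dots,m\}$, which is $\frac{m(m-1)}{n(n-1)}\,n!$; hence $\sum_{\sigma\in\Sym_n}\sigma(P) = m(m-1)(n-2)!\,I_{|\altmathcal{S}_n|}$.

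Dividing the sum-of-squares matrix by the positive scalar $m(m-1)(m-2)(n-3)!$ (scaling by a positive real preserves sums of squares), I obtain that
\[
\Adj_n \;-\; \lambda\,\frac{m(m-1)(n-2)!}{m(m-1)(m-2)(n-3)!}\,I_{|\altmathcal{S}_n|}
\;=\;\Adj_n-\frac{n-2}{m-2}\lambda I_{|\altmathcal{S}_n|}
\]
is a sum of squares, which is the claim. The only genuinely delicate point is the bookkeeping for $\sum_{\sigma\in\Sym_n}\sigma(P)$ — one must be careful that the relevant count is over unordered index pairs $\{i,j\}$ rather than ordered ones so that the $(n-2)!$ (not $(n-1)!$ or $(n-3)!$) appears — but this is precisely parallel to the stabilizer computation already carried out in \Cref{lemma:nice_embeddings}, so no new ideas are needed; everything else is formal manipulation with the sum-of-squares cone and the already-established equivariance.
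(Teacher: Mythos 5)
Your proposal is correct and follows essentially the same route as the paper: embed $\Adj_m-\lambda I_{|\altmathcal{S}_m|}$, average over $\Sym_n$, apply \Cref{lemma:nice_embeddings} to the adjacent part, count the contribution of the embedded identity (your per-entry count $m(m-1)(n-2)!$ agrees with the paper's aggregate count), and rescale. No substantive differences.
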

	\begin{proof}
		As in the case of $\Adj^{\pm}_m$, let us also use the notation $I_{|\altmathcal{S}_m|}$ for the canonical embedding of $I_{|\altmathcal{S}_m|}\in\mathbb{M}_{|\altmathcal{S}_m|\times|\altmathcal{S}_m|}(\RR G_m)$ into $\mathbb{M}_{|\altmathcal{S}_n|\times|\altmathcal{S}_n|}(\RR G_n)$. We show first that
		\begin{align}\label{eqnarray:identity_sym}
			(n-2)!m(m-1)I_{|\altmathcal{S}_n|}=\sum_{\sigma\in \Sym_n}\sigma I_{|\altmathcal{S}_m|}.
		\end{align}
		Note that the right-hand side of the expression above is a multiple of the identity matrix. Let $\omega_0$ be $1$ in the case $G_n=\SL_n(\ZZ)$ and $2$ in the case $G_n=\SAut(F_n)$. Then each summand $\sigma I_{|\altmathcal{S}_m|}$ contributes $|\altmathcal{S}_m|=m(m-1)\omega_0$ neutral elements to the diagonal which yields $n!m(m-1)\omega_0$ neutral elements altogether. Since the diagonal of $I_{|\altmathcal{S}_n|}$ has $|\altmathcal{S}_n|=n(n-1)\omega_0$ neutral elements, the multiplication factor is equal to $(n-2)!m(m-1)$.
		
		Suppose $\Adj_m-\lambda I_{|\altmathcal{S}_m|}$ is a sum of squares. Combining Lemma \Cref{lemma:nice_embeddings} and (\ref{eqnarray:identity_sym}), we conclude that the following matrix is a sum of squares in $\mathbb{M}_{|\altmathcal{S}_n|\times|\altmathcal{S}_n|}(\RR G_n)$:
		$$
		m(m-1)(m-2)(n-3)!\Adj_n-(n-2)!m(m-1)\lambda I_{|\altmathcal{S}_n|}.
		$$
		Dividing the expression above by $(n-3)!m(m-1)$, we get the assertion.
	\end{proof}
	\section{Application to $G_n=\SL_n(\ZZ)$}\label{section:application_sln}
	In this section, let $G_n=\SL_n(\ZZ)$ for any $n\geq 3$. Using the algorithm for estimating lower bounds for spectral gaps for matrices over group rings described in \cite{kmn}, we were able to obtain the following 
	\begin{lemma}\label{lemma:sl3_computation}
		The matrix $\Adj_3-0.217I_{6}$ is a sum of squares in $\mathbb{M}_{6\times 6}(\RR \SL_3(\ZZ))$.
	\end{lemma}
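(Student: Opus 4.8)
The plan is to reduce the statement to a finite-dimensional semidefinite feasibility problem and then certify it numerically, following the methodology of \cite{kmn}. Concretely, $\Adj_3$ is a $6\times 6$ matrix over $\RR\SL_3(\ZZ)$, and we seek matrices $M_1,\dots,M_l\in\mathbb{M}_{6\times 6}(\RR\SL_3(\ZZ))$ with $\Adj_3-0.217\,I_6=\sum_k M_k^*M_k$. The first step is to fix a finite symmetric subset $E\subseteq\SL_3(\ZZ)$ (a ball of suitable radius in the word metric with respect to the elementary generators $E_{ij}$) that is large enough to support both the target element and the candidate $M_k$'s; in practice one takes the $M_k$ supported on the ball of radius one or two, so that the products $M_k^*M_k$ are supported on a ball of radius two or four. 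One then writes the sum-of-squares condition on the support $E$ as the condition that a certain Gram matrix $Q$, indexed by $\{1,\dots,6\}\times E$ (equivalently a matrix of size $6|E|$), be positive semidefinite and satisfy the linear constraints coming from matching coefficients: for each group element $g$ and each pair of generator indices $(s,t)$, the sum $\sum_{hh'{}^{-1}=g} Q_{(s,h),(t,h')}$ must equal the $(s,t,g)$-coefficient of $\Adj_3-0.217\,I_6$.

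The second step is to cut down the size of this SDP using the $\Sym_3$-symmetry established in \cref{subsection:inv}: by \Cref{lemma:adj_invariance} and \Cref{corollary:jacobianInvariance} the matrix $\Adj_3$ is $\Sym_3$-invariant, so we may look for an invariant Gram matrix $Q$ and block-diagonalize it along the isotypic components of the $\Sym_3$-action on $\mathbb{M}_{6\times 6}(\RR\SL_3(\ZZ))$ restricted to the chosen support. This is exactly the symmetry-adapted SDP reduction used in \cite{kkn,kmn}; it makes the problem numerically tractable. The third step is to solve the resulting SDP numerically (with a solver such as SCS or MOSEK), obtaining a floating-point Gram matrix $Q_0\succeq 0$ that approximately satisfies the linear constraints for the slightly larger value $\lambda=0.22$ or so, leaving a margin.

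The fourth and crucial step is to convert the numerical solution into a rigorous proof. Here one follows the rounding-and-interval-arithmetic scheme of \cite{kmn}: project $Q_0$ onto the affine subspace defined by the exact linear constraints for the slightly smaller value $\lambda=0.217$, round the entries to nearby rationals, and then verify in exact (or interval) arithmetic that the rounded matrix $Q$ is still positive semidefinite and satisfies all constraints exactly; positive semidefiniteness of the rational $Q$ can be certified by an exact $LDL^\top$ decomposition or by exhibiting $Q=R^\top R$. Once this is done, reading off $R$ gives the explicit matrices $M_k$ and hence the sum-of-squares certificate. The main obstacle is the last step: the gap between the numerically optimal $\lambda$ and the value $0.217$ must be large enough to absorb the rounding error and the constraint-projection error, so one needs both a good solver tolerance and a careful choice of the support $E$ and the denominator used in rounding; this is precisely the delicate part of the computation in \cite{kmn}, and the value $0.217$ (rather than, say, the $0.32$ appearing for the full $\Delta_1$ in \cite{kmn}) reflects the smaller margin available for the adjacent part alone. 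The computation itself is carried out on a computer, and we refer to the accompanying code for the explicit certificate.
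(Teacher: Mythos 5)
Your proposal matches the paper's approach: the lemma is established by exactly the computer-assisted scheme you describe --- setting up the sum-of-squares condition as a semidefinite program over a finite support (the paper's certificate has the squared matrices supported on the ball of radius $2$ in the word metric for $\altmathcal{S}_3$), solving it numerically, and then certifying the result rigorously via the rounding and verification procedure of \cite{kmn}, with the paper deferring the details to the accompanying Julia scripts. No substantive difference in method.
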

	\noindent The replication details of the computation justifying the result above have been desribed in subsection \ref{subsection:replication}.
	
	\begin{corollary}[cf. \Cref{corollary:main_intr}]\label{corollary:main}
		Let $\Delta_1$ be the cohomological Laplacian in degree one of $G_n$. Then $\Delta_1-\lambda I_{|\altmathcal{S}_n|}$ is a sum of squares in $\mathbb{M}_{|\altmathcal{S}_n|\times|\altmathcal{S}_n|}(\RR G_n)$ for $\lambda=0.217(n-2)$.
	\end{corollary}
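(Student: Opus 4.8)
The plan is to combine the induction result of \Cref{corollary:main_ind} (equivalently \Cref{theorem:main0}) with the explicit computation of \Cref{lemma:sl3_computation}, and then to recover the full Laplacian from its adjacent part using the remarks of \cref{section:fox} on the freedom in choosing relators and on discarding parts of the Laplacian that are already sums of squares.

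First I would apply \Cref{corollary:main_ind} with $m=3$. Since \Cref{lemma:sl3_computation} tells us that $\Adj_3 - 0.217\, I_6$ is a sum of squares in $\mathbb{M}_{6\times 6}(\RR\SL_3(\ZZ))$, the hypothesis of \Cref{corollary:main_ind} holds with $\lambda = 0.217$ and $m=3$. The conclusion then gives that
\begin{align*}
\Adj_n - \frac{n-2}{3-2}\cdot 0.217\, I_{|\altmathcal{S}_n|} = \Adj_n - 0.217(n-2)\, I_{|\altmathcal{S}_n|}
\end{align*}
is a sum of squares in $\mathbb{M}_{|\altmathcal{S}_n|\times|\altmathcal{S}_n|}(\RR G_n)$ for every $n\geq 3$. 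This already produces the claimed constant $\lambda = 0.217(n-2)$ for the adjacent part.

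Next I would pass from $\Adj_n$ to the full Laplacian $\Delta_1 = \Delta_1^+ + \Delta_1^- = (\Sq_n^+ + \Adj_n^+ + \Op_n^+) + (\Sq_n^- + \Adj_n^- + \Op_n^-)$. Write $\Delta_1 = \Adj_n + \left(\Sq_n^- + \Sq_n^+\right) + \left(\Op_n^+ + \Op_n^-\right)$, where $\Adj_n = \Adj_n^+ + \Adj_n^-$. By \Cref{lemma:sq_op_sos} the matrices $\Sq_n^-$ and $\Op_n^+ + 2\Op_n^-$ are sums of squares; and $\Sq_n^+$, being of the form $\sum J_i^* J_i$ over the edge relators, is a sum of squares by its very definition, as is $\Op_n^+$. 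One must be slightly careful about the $\Op$ term: $\Op_n^+ + \Op_n^- = (\Op_n^+ + 2\Op_n^-) - \Op_n^-$, and $\Op_n^-$ alone need not be a sum of squares, so I would instead keep $\Op_n^+ + 2\Op_n^-$ together. Concretely, the argument of \Cref{remark:star_conj} and the paragraph preceding it in \cref{section:fox} show that it suffices to exhibit a spectral gap for \emph{some} sub-Laplacian built from a subset of the relators together with $\Delta_1^-$; since adding further sums of squares only improves (or preserves) the gap, it is enough to note that $\Delta_1 - 0.217(n-2)\,I_{|\altmathcal{S}_n|}$ is obtained from $\Adj_n - 0.217(n-2)\,I_{|\altmathcal{S}_n|}$ by adding the sum of squares $\Sq_n^+ + \Sq_n^- + \Op_n^+ + \Op_n^-$. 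To see this last matrix is a sum of squares, decompose it as $\Sq_n^+ + \Sq_n^- + (\Op_n^+ + 2\Op_n^-) + \Op_n^- $ — but since that reintroduces the problematic $\Op_n^-$, the cleanest route is simply to observe that $\Delta_1 = \Adj_n + \Sq_n^+ + \Sq_n^- + \Op_n^+ + \Op_n^-$ and that $\Sq_n^+ + \Sq_n^- + \Op_n^+ + \Op_n^-$ is nonnegative in the appropriate sense; the honest version of this step uses that $\Op_n^+ + \Op_n^-$ has vanishing off-diagonal outside the relevant block (as established in the proof of \Cref{lemma:sq_op_sos}) so that only a harmless diagonal piece of $\Op_n^+$ survives, which is itself a sum of squares.

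The main obstacle is precisely this bookkeeping with the opposite part: \Cref{lemma:sq_op_sos} only guarantees $\Op_n^+ + 2\Op_n^-$ (not $\Op_n^+ + \Op_n^-$) is a sum of squares, so one cannot naively add $\Op_n^+ + \Op_n^-$ to the adjacent gap. I expect the resolution to be the observation made in the proof of \Cref{lemma:sq_op_sos} that the off-diagonal parts of $\Op_n^+$ and $2\Op_n^-$ exactly cancel, leaving $\Op_n^+ + \Op_n^-$ equal to $(\text{diagonal part of }\Op_n^+) - \Op_n^-$ on the relevant block; one then writes $\Delta_1 = \Adj_n + \big(\Sq_n^- + \tfrac12(\Op_n^+ + 2\Op_n^-)\big) + \big(\Sq_n^+ + \tfrac12\Op_n^+\big)$, each parenthesized summand being a sum of squares (the first by \Cref{lemma:sq_op_sos}, the second by definition of $\Sq_n^+$ and $\Op_n^+$ as $J^*J$-type matrices), whence adding it to $\Adj_n - 0.217(n-2)I_{|\altmathcal{S}_n|}$ yields the claim. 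Apart from this, the proof is immediate from the two quoted results.
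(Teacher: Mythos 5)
Your proposal is correct and follows essentially the same route as the paper: apply \Cref{corollary:main_ind} with $m=3$ to \Cref{lemma:sl3_computation}, then add sums of squares, resolving the opposite-part bookkeeping via exactly the paper's identity $\Op_n^++\Op_n^-=\tfrac12\left(\Op_n^++2\Op_n^-\right)+\tfrac12\Op_n^+$ (the paper writes $\Delta_1-\lambda I=\tfrac12\left(2(\Adj_n-\lambda I)+\widetilde{\Op}_n+\Op_n^++2\Sq_n^-\right)$, which is your final decomposition once one notes that $\Sq_n^+=0$ for $\SL_n(\ZZ)$ after discarding the relator $(E_{12}E_{21}^{-1}E_{12})^4$). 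The only cosmetic issue is the detour in your middle paragraph suggesting the off-diagonals of $\Op_n^+$ and $\Op_n^-$ (rather than $2\Op_n^-$) cancel, but your final parenthesized decomposition supersedes that and is correct.
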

	\begin{proof}
		\Cref{lemma:sl3_computation} together with \Cref{corollary:main_ind} show that $\Adj_n-0.217(n-2)I_{|\altmathcal{S}_n|}$ is a sum of squares in $\mathbb{M}_{|\altmathcal{S}_n|\times|\altmathcal{S}_n|}(\RR G_n)$. It follows by \Cref{lemma:sq_op_sos} that the matrices $\Sq_n^-$ and $\widetilde{\Op}_n=\Op_n^++2\Op_n^-$ are sums of squares. From this we conclude that $\Delta_1-0.217(n-2)I_{|\altmathcal{S}_n|}$ is a sum of squares in $\mathbb{M}_{|\altmathcal{S}_n|\times|\altmathcal{S}_n|}(\RR G_n)$:
		$$
		\Delta_1-0.217(n-2)I_{|\altmathcal{S}_n|}=\frac{1}{2}\left(2\left(\Adj_n-0.217(n-2)I_{|\altmathcal{S}_n|}\right)+\widetilde{\Op}_n+\Op_n^++2\Sq_n^-\right).
		$$
	\end{proof}
	\begin{remark}\emph{
		\Cref{corollary:main} would actually generalize straightforward to $\SAut(F_n)$, provided we could perform a base computation for $\Adj_n$ e.g. for $n=4$ (cf. \Cref{lemma:sl3_computation}) -- we would need to add $\Sq_n^+$ to get $\Delta_1$. Since $\Sq_n^+$ is a sum of squares, that would not be a problem. Unfortunately, we were unable to obtain such a base result for $\SAut(F_n)$.
	}
	\end{remark}
	\subsection{Replication details for $\SL_3(\mathbb{Z})$}\label{subsection:replication}
	The arguments for converting the numerical sum of squares approximation to the actual one have been described in subsection 3.2 of \cite{kmn}. Further details concerning finding the numerical approximation can be also found in \cite{kmn}. 
	
	We provide a Julia \cite{bezanson2017julia} code to replicate the result of \Cref{lemma:sl3_computation}. The replication has been described at \cite{Kaluba_LowCohomologySOS}, in the "README.md" file, in the section marked with the arXiv identifier of this preprint. Note that, in order to access the version of the repository corresponding to this article, switching to the branch marked with its arXiv identifier is necessary. The script "SL\textunderscore3\textunderscore Z\textunderscore adj.jl" contains the replication code for the result of \Cref{lemma:sl3_computation}. We encourage, however, to use the precomputed solution and run the script "SL\textunderscore3\textunderscore Z\textunderscore adj\textunderscore cert.jl" contained in the "sl3\textunderscore  adj\textunderscore precom" folder. It should take approximately $1$ minute on a standard laptop computer to run the certification script providing the rigorous mathematical proof (the script containing the whole computation takes in turn about $2$ hours). The hermitian-squared matrices from the decomposition obtained in \Cref{lemma:sl3_computation} have entries supported on the ball of radius $2$ with respect to the word-length metric defined by the generating set $\altmathcal{S}_3$ consisting of six elementary matrices $E_{12}$, $E_{13}$, $E_{21}$, $E_{23}$, $E_{31}$, and $E_{32}$. 
	\begin{bibdiv}
		\begin{biblist}
			\bib{bekka}{book}{
				author={Bekka, Bachir},
				author={de la Harpe, Pierre},
				author={Valette, Alain},
				title={Kazhdan's property (T)},
				series={New Mathematical Monographs},
				volume={11},
				publisher={Cambridge University Press, Cambridge},
				date={2008},
				pages={xiv+472},
				isbn={978-0-521-88720-5},
				review={\MR{2415834}},
				doi={10.1017/CBO9780511542749},
			}
		\bib{bezanson2017julia}{article}{
			title={Julia: A fresh approach to numerical computing},
			author={Bezanson, Jeff and Edelman, Alan and Karpinski, Stefan and Shah, Viral B},
			journal={SIAM review},
			volume={59},
			number={1},
			pages={65--98},
			year={2017},
			publisher={SIAM},
			url={https://doi.org/10.1137/141000671}
		}
	\bib{bader-nowak}{article}{
		author={Bader, U.},
		author={Nowak, P. W.},
		title={Group algebra criteria for vanishing of cohomology},
		journal={Journal of Functional Analysis},
		volume={279},
		number={11},
		date={2020},
	}
	
	\bib{Bader_Sauer}{arXiv}{
		title={Higher Kazhdan property and unitary cohomology of arithmetic groups}, 
		author={Bader, Uri},
		author={Sauer, Roman},
		year={2023},
		eprint={2308.06517},
		archivePrefix={arXiv},
		primaryClass={math.RT},
	}
	\bib{sln_pres}{article}{
		author={Conder, Marston},
		author={Robertson, Edmund},
		author={Williams, Peter},
		title={Presentations for $3$-dimensional special linear groups over
			integer rings},
		journal={Proc. Amer. Math. Soc.},
		volume={115},
		date={1992},
		number={1},
		pages={19--26},
		issn={0002-9939},
		review={\MR{1079696}},
		doi={10.2307/2159559},
	}
\bib{fox1}{article}{
	author={Fox, Ralph H.},
	title={Free differential calculus. I. Derivation in the free group ring},
	journal={Ann. of Math. (2)},
	volume={57},
	date={1953},
	pages={547--560},
	issn={0003-486X},
	review={\MR{53938}},
	doi={10.2307/1969736},
}	
		\bib{fox2}{article}{
		author={Fox, Ralph H.},
		title={Free differential calculus. II. The isomorphism problem of groups},
		journal={Ann. of Math. (2)},
		volume={59},
		date={1954},
		pages={196--210},
		issn={0003-486X},
		review={\MR{62125}},
		doi={10.2307/1969686},
	}
	
	\bib{Gersten}{article}{
		author={Gersten, S. M.},
		title={A presentation for the special automorphism group of a free group},
		journal={J. Pure Appl. Algebra},
		volume={33},
		date={1984},
		number={3},
		pages={269--279},
		issn={0022-4049},
		review={\MR{761633}},
		doi={10.1016/0022-4049(84)90062-8},
	}

	\bib{kkn}{article}{
	author={Kaluba, M.},
	author={Kielak, D.},
	author={Nowak, P. W.},
	title={On property (T) for $\operatorname{Aut}(F_n)$ and $\operatorname{SL}_n(\Bbb
		{Z})$},
	journal={Ann. of Math. (2)},
	volume={193},
	date={2021},
	number={2},
	pages={539--562},
}
\bib{Kaluba_LowCohomologySOS}{misc}{
	author={Kaluba, Marek},
	author={Mizerka, Piotr},
	title={{LowCohomologySOS}},
	date={2022},
	note={\url{https://github.com/piotrmizerka/LowCohomologySOS}},
}

			\bib{kmn}{article}{
				author={Kaluba, M.},
				author={Mizerka, P.},
				author={Nowak, P. W.},
				title={Spectral Gap for the Cohomological Laplacian of $\operatorname{SL}_3(\Bbb
					{Z})$},
				journal={Experimental Mathematics},
				volume={0},
				date={2024},
				number={0},
				pages={1--6},
				doi={doi:10.1080/10586458.2024.2333722},
			}
				\bib{kn2018}{article}{
				author={Kaluba, Marek},
				author={Nowak, Piotr W.},
				title={Certifying numerical estimates of spectral gaps},
				journal={Groups Complex. Cryptol.},
				volume={10},
				date={2018},
				number={1},
				pages={33--41},
				issn={1867-1144},
				review={\MR{3794918}},
				doi={10.1515/gcc-2018-0004},
			}
		
			\bib{kno}{article}{
			author={Kaluba, M.},
			author={Nowak, P. W.},
			author={Ozawa, N.},
			title={${\rm Aut}(\Bbb F_5)$ has property (T)},
			journal={Math. Ann.},
			volume={375},
			date={2019},
			number={3-4},
			pages={1169--1191},
		}
		\bib{lyndon2}{book}{
		author={Lyndon, Roger C.},
		author={Schupp, Paul E.},
		title={Combinatorial group theory},
		series={Ergebnisse der Mathematik und ihrer Grenzgebiete, Band 89},
		publisher={Springer-Verlag, Berlin-New York},
		date={1977},
		pages={xiv+339},
		isbn={3-540-07642-5},
		review={\MR{0577064}},
	}
\bib{lyndon1}{article}{
	author={Lyndon, Roger C.},
	title={Cohomology theory of groups with a single defining relation},
	journal={Ann. of Math. (2)},
	volume={52},
	date={1950},
	pages={650--665},
	issn={0003-486X},
	review={\MR{47046}},
	doi={10.2307/1969440},
}

			\bib{nitsche}{arXiv}{
				title={Computer proofs for Property (T), and SDP duality}, 
				author={Martin Nitsche},
				year={2022},
				eprint={2009.05134},
				archivePrefix={arXiv},
				primaryClass={math.GR}
			}

			\bib{ozawa1}{article}{
				author={Ozawa, N.},
				title={Noncommutative real algebraic geometry of Kazhdan's property (T)},
				journal={J. Inst. Math. Jussieu},
				volume={15},
				date={2016},
				number={1},
				pages={85--90},
				issn={1474-7480},
			}

		\end{biblist}
	\end{bibdiv}
	\vspace{20pt} 
\end{document}